\documentclass[a4paper,12pt]{article}
%%%%%%%%%%%%%%%%%%%%%%%%%%%%%%%%%%%%%%%%%%%%%%%%%%%%%%%%%%%%%%%%%%%%%%%%%%%%%%%%%%%%%%%%%%%%%%%%%%%%%%%%%%%%%%%%%%%%%%%%%%%%%%%%%%%%%%%%%%%%%%%%%%%%%%%%%%%%%%%%%%%%%%%%%%%%%%%%%%%%%%%%%%%%%%%%%%%%%%%%%%%%%%%%%%%%%%%%%%%%%%%%%%%%%%%%%%%%%%%%%%%%%%%%%%%%
\usepackage{amsfonts,amsmath,amssymb,amsthm,verbatim,placeins,color,pgfplots,comment,placeins,caption,subcaption,enumerate,bm}
\usepackage{graphicx}
\usepackage{geometry}
\geometry{width=16cm,height=22.5cm,centering}
\usepackage[pdfauthor={DG},pdfstartview={FitH},colorlinks=true,citecolor=blue]{hyperref}
\usepackage[english]{babel}
\usepackage[sort&compress]{natbib}

\newtheorem{theorem}{Theorem}[section]
\newtheorem{proposition}{Proposition}[section]
\newtheorem{lemma}{Lemma}[section]

\theoremstyle{remark}
\newtheorem{remark}{Remark}[section]
\theoremstyle{definition}
\newtheorem{definition}{Definition}[section]
\newtheorem{example}{Example}[section]

\theoremstyle{theorem}
\newtheorem*{assumption}{Assumption}

\theoremstyle{remark}
\newtheoremstyle{myremark}{}{}{\color{blue}\small}{}{\color{blue}\bfseries}{}{ }{}

\theoremstyle{myremark}

\newcommand{\E}{\mathbb{E}} %for expectation
 %for probability
 %for variance
 %for covariance
 %for correlation
\renewcommand{\Re}{\operatorname{Re}} %for real part
\newcommand{\R}{{\mathbb R}}
\newcommand{\N}{{\mathbb N}}

\newcommand{\C}{{\mathbb C}}

\newcommand{\T}{{\mathcal{T}}}
\newcommand{\mS}{{\mathcal{S}}}
\newcommand{\mH}{{\mathcal{H}}}

\makeatletter
\renewcommand*{\@fnsymbol}[1]{\ensuremath{\ifcase#1\or *\or \mathparagraph\or \ddagger\or
        \mathsection\or \mathparagraph\or \|\or **\or \dagger\dagger
        \or \ddagger\ddagger \else\@ctrerr\fi}}
\makeatother

%%%%%%%%%%%%%%%%%%%%%%%%%%%%%%%%%%%%%%%%%%%%%%%%%%%%
%%%%%%%%%%% Preprint oznaka
%%%%%%%%%%%%%%%%%%%%%%%%%%%%%%%%%%%%%%%%%%%%%%%%%%%%
%\usepackage[printwatermark]{xwatermark}
%\newwatermark*[page=1000,color=blue!20,angle=90,scale=1,xpos=-0.522\paperwidth,ypos=0,picture-on-top-text]{\fcolorbox{gray!20}{gray!20}{\makebox[\paperheight]{preprint}}}
%\usepackage{xcolor}
%\usepackage{graphicx}
%%%%%%%%%%%%%%%%%%%%%%%%%%%%%%%%%%%%%%%%%%%%%%%%%%%%

%\pagestyle{myheadings}

\begin{document}
%\setcounter{page}{1000}
%%%%%%%% TITLE PAGE %%%%%%%%%%%%%%%%%%%%
%\thispagestyle{empty}
%\vspace*{10cm}
%\noindent This is a preprint of the paper:\\
%
%\noindent Grahovac, D.~(2020) Multifractal processes: Definition, properties and new examples, \emph{Chaos, Solitons \& Fractals}, \textbf{134}(109735).
%
%
%\noindent URL: \url{https://www.sciencedirect.com/science/article/pii/S0960077920301375}\\
%DOI: \url{https://doi.org/10.1016/j.chaos.2020.109735}
%\newpage
%\setcounter{page}{1}
%%%%%%%%%%%%%%%%%%%%%%%%%%%%%%%%%%%%%%%%

\renewcommand*{\thefootnote}{\fnsymbol{footnote}}

\begin{center}
\Large{\textbf{Multifractal processes: definition, properties and new examples}}\\
\bigskip
%\large{\today}\\
\bigskip
Danijel Grahovac$^1$\footnote{dgrahova@mathos.hr}\\
\end{center}

\bigskip
\begin{flushleft}
\footnotesize{
$^1$ Department of Mathematics, University of Osijek, Trg Ljudevita Gaja 6, 31000 Osijek, Croatia}
\end{flushleft}

\bigskip

\textbf{Abstract: }
We investigate stochastic processes possessing scale invariance properties which we refer to as multifractal processes. The examples of such processes known so far do not go much beyond the original cascade construction of Mandelbrot. We provide a new definition of the multifractal process by generalizing the definition of the self-similar process. We establish general properties of these processes and show how existing examples fit into our setting. Finally, we define a new class of examples inspired by the idea of Lamperti transformation. Namely, for any pair of infinitely divisible distribution and a stationary process one can construct a multifractal process.

\bigskip

\section{Introduction}
Multifractality may refer to a variety of properties which are usually used to describe objects possessing some type of scale invariance. The term itself has its true meaning in the analysis of local regularity properties of measures and functions. When it comes to stochastic processes, multifractality usually refers to models that exhibit nonlinear scaling of moments in time. More precisely, for some range of values of $q$ and $t$, the $q$-th absolute moment of a process $\{X(t)\}$ at $t$ can be written in the form
\begin{equation}\label{momscal:motivating}
  \E|X(t)|^q = c(q) t^{\tau (q)},
\end{equation}
where the so-called scaling function $q \mapsto \tau(q)$ is nonlinear. This contrasts the case of self-similar processes for which $\tau$ is a linear function. The property can also be based on the moments of increments of the process and can also be assumed to hold only asymptotically for $t\to 0$, in contrast to the exact scaling in \eqref{momscal:motivating}. For more details see \cite{mandelbrot1997mmar}, \cite{riedi2003multifractal} and the references therein. The processes satisfying \eqref{momscal:motivating} or some variant of it have received considerable attention in variety of applications like turbulence, finance, climatology, medical imaging, texture classification (see e.g.~\cite{mandelbrot1997mmar}, \cite{bacry2008continuous}, \cite{robert2008hydrodynamic}, \cite{duchon2012forecasting}, \cite{lovejoy2013weather},  \cite{abry2015irregularities}, \cite{pavlov2016multifractality}, \cite{kalamaras2017multifractal}, \cite{laib2018multifractal} and the references therein). 

The scaling of moments as in \eqref{momscal:motivating} is usually a consequence of a more general scaling property and so it is not the best candidate for a defining property. In this paper we specify multifractality as a property of the finite dimensional distributions of the process. Recall that the process $\{X(t)\}$ is self-similar if for every $a>0$ there exists $b>0$ such that
\begin{equation*}
\{X(at)\} \overset{d}{=} \{bX(t)\},
\end{equation*}
where $\{\cdot\} \overset{d}{=} \{\cdot\}$ denotes the equality of the finite dimensional distributions of two processes. The basic idea of our approach is to generalize the definition of a self-similar process. This idea can be traced back to Mandelbrot (see e.g.~\cite{mandelbrot1997mmar}), who used the following property as a motivation for the scaling of moments: for a process $\{X(t), \, t \in \T\}$ there is a set $\Lambda \subset (0,\infty)$ such that for every $\lambda \in \Lambda$ there exists a positive random variable $M_{\lambda}$ independent from the process $\{X(t)\}$ such that
\begin{equation}\label{mfdef:motivating}
  X(\lambda t) \overset{d}{=} M_\lambda X(t), \quad \forall t \in \T.
\end{equation}
However, to our knowledge, such generalizations of self-similarity have never been studied systematically. One can also find \eqref{mfdef:motivating} stated in the sense of equality of finite dimensional distributions, that is
\begin{equation}\label{mfdef:motivatingfdd}
  \{X(\lambda t) \}_{t\in \T} \overset{d}{=} \{ M_\lambda X(t)\}_{t\in \T}.
\end{equation}
Such properties are also referred to as exact scale invariance, exact stochastic scale invariance or statistical self-similarity (see e.g.~\cite{allez2013lognormal}, \cite{bacry2003log}, \cite{barral2014exact}). A typical example of a process satisfying \eqref{mfdef:motivatingfdd} are cascade processes constructed in \cite{bacry2003log}, \cite{barral2002multifractal} and \cite{muzy2002multifractal} . A class of examples has also been given in \cite{veneziano1999basic} called independent stationary increment ratio processes, but these actually satisfy a variant of \eqref{mfdef:motivating} and not the corresponding variant of \eqref{mfdef:motivatingfdd} (see Subsection \ref{subsec:examples} for details). 

Imposing that relation \eqref{mfdef:motivating} holds in the sense of equality of finite dimensional distributions seems unnecessarily restrictive as it requires for $\lambda \in \Lambda$ the existence of a \emph{single} random variable $M_\lambda$ such that $(X(\lambda t_1), \dots, X(\lambda t_n)) =^d (M_\lambda X(t_1), \dots,\allowbreak M_\lambda X(t_n))$ for any choice of $t_1,\dots,t_n\in \T$ and $n\in \N$. Instead we will require only that the random factors corresponding to each time point are identically distributed (see Definition \ref{def:mf} for details). This simple generalization of \eqref{mfdef:motivatingfdd} will allow us to obtain a whole class of new examples and to build a theory that generalizes self-similarity in a natural way. 

In Section \ref{sec2} we start with the definition and its ramifications. We show that, in contrast to self-similar processes, the scaling property of multifractal processes cannot hold for every scale $\lambda > 0$ as it then reduces to self-similarity. Also, the equality of finite dimensional distributions as in \eqref{mfdef:motivatingfdd} cannot hold over $(0,\infty)$ except when the process is self-similar. From these facts we identify restrictions that should be imposed on the set of scales and the time sets involved.

In Section \ref{sec3} we investigate general properties of multifractal processes. We show an intimate connection between scaling factors and infinitely divisible distributions. More precisely, by appropriately transforming a family of scaling factors indexed by $\lambda \in \Lambda$, one obtains a process which has one-dimensional distributions as some L\'evy process. From this relation scaling of moments as in \eqref{momscal:motivating} is a straightforward consequence. We also show that the cascade processes provide an example where the process obtained from the family of scaling factors is not a L\'evy process although its one-dimensional marginals correspond to some L\'evy process. 

In Section \ref{sec4}, using the idea of Lamperti transformation, we define a class of new multifractal processes which we call \emph{$L$-multifractals}. To any pair of L\'evy process and stationary process there corresponds one $L$-multifractal process. 

Properties of $L$-multifractal processes are investigated in Section \ref{sec5}. It is shown that these process do not have stationary increments in general. However, by restricting the time set and by carefully choosing the corresponding L\'evy process and stationary process, a process with second-order stationary increments may be obtained.

Although most papers on multifractal processes deal with variations of \eqref{momscal:motivating}, an exception is the work of \cite{veneziano1999basic} which aims at providing a general treatment of processes satisfying a variant of \eqref{mfdef:motivatingfdd}. A close inspection of the proofs there shows that all the examples given actually satisfy a variant of \eqref{mfdef:motivating} and not \eqref{mfdef:motivatingfdd}. Moreover, the scaling factors are stated to be in one-to-one correspondence with L\'evy processes, which would exclude the case of cascade processes as we show in this paper. Let us also mention that the construction of the family of scaling factors provided there via L\'evy process necessarily involves bounded time interval and cannot be extended to $(0,\infty)$. Another line of development that generalizes the cascade construction is the multiplicative chaos theory that can be seen as a continuous analog of Mandelbrot's $\star$-scale invariance (see \cite{rhodes2014gaussian} for details). However, the random measures obtained in this way do not possess exact scale invariance in general, except when they reduce to the cascade case. See \cite{allez2013lognormal}, \cite{barral2013gaussian}, \cite{rhodes2014gaussian} and the references therein.

In \cite{veneziano1999basic}, the processes satisfying \eqref{mfdef:motivatingfdd} are referred to as stochastically self-similar (see also \cite{gupta1990multiscaling}). Although this is a more meaningful term than multifractal, we prefer the later as it is now widespread in the literature.

\section{The definition of a multifractal process}\label{sec2}
We first introduce some notation and assumptions. All the processes considered will have values in $\R$. In what follows, $=^d$ stands for the equality in law of two random variables, while $\{\cdot\} =^d \{\cdot\}$ denotes the equality of finite dimensional distributions of two stochastic processes. If the time set is not specified in this equality than it is assumed that it holds over the whole time set where the processes are defined. A process $\{X(t), \, t \in \T\}$ is said to be nontrivial if $X(t)$ is not a constant a.s.~for every $t\in \T$. Every process considered will be assumed to be not identically null so that there is $t\in \T$, $t\neq 0$, such that $P(X(t)\neq 0)>0$.  

As elaborated in the introduction, \eqref{mfdef:motivatingfdd} may be overly restrictive and it does not provide a natural generalization of self-similar processes. For this reason, we introduce the following definition of multifractality.

\begin{definition}\label{def:mf}
A stochastic process $X=\{X(t), \, t \in \T\}$ is \textbf{multifractal} if there exist sets $\Lambda \subset (0,\infty)$ and $\mS \subset \T$ such that for every $\lambda \in \Lambda$ there exists a family of identically distributed positive random variables $\{M(\lambda,t), \, t \in \mathcal{S} \}$, independent of $\{X(t)\}$ such that
\begin{equation}\label{mfdef:prop}
  \{ X(\lambda t) \}_{t \in \mS} \overset{d}{=} \{ M(\lambda, t) X(t) \}_{t \in \mS}.
\end{equation}
\end{definition}

To clarify further, \eqref{mfdef:prop} means that for every choice of $t_1,\dots,t_n \in \mS$, $n \in \N$ it holds that
\begin{equation*}
  \left(X(\lambda t_1),\dots,X(\lambda t_n) \right) \overset{d}{=} \left( M(\lambda, t_1) X(t_1), \dots, M(\lambda, t_n) X(t_n) \right).
\end{equation*}
Clearly, this generalizes the equality \eqref{mfdef:motivatingfdd} where $M(\lambda,t)=M_\lambda$ does not depend on $t$. This and the case of self-similar processes inspired the condition stated in the definition that for every fixed $\lambda \in \Lambda$
\begin{equation}\label{Mstatmarg}
  M(\lambda, t_1) \overset{d}{=} M(\lambda, t_2), \quad \forall t_1, t_2 \in \mS.
\end{equation}
We will refer to random field $M=\{M(\lambda,t), \, \lambda \in \Lambda, t \in \mS\}$ as the \textbf{family of scaling factors}.

\begin{remark}
Definition \ref{def:mf} involves several sets:
\begin{itemize}
	\item $\T$ is the time set where the process is defined,
	\item $\Lambda$ is a set of scales for which \eqref{mfdef:prop} holds
	\item $\mS$ is a subset of $\T$ over which the equality of finite dimensional distributions \eqref{mfdef:prop} holds.
\end{itemize}	
Such level of generality will prove to be important later on. We will assume that $\T$ is one of the following: $\T=(0,\infty)$, $\T=[0,\infty)$ or $\T=(0,T]$, $\T=[0,T]$ for some $T>0$. Note also that $0 \in \mS$ makes the unduly restriction that $X(0)=0$ a.s.~if $M(\lambda,t)$ is not a constant a.s.
\end{remark}

\begin{example}
Suppose $X$ is multifractal with $\Lambda=(0,\infty)$, $\mS=\T$ and for every $\lambda \in \Lambda$, $M(\lambda,t)$ is deterministic. Due to \eqref{Mstatmarg}, $t \mapsto M(\lambda,t)$ is constant, say $m(\lambda)$. The definition then reduces to the classical definition of self-similarity (see e.g.~\cite{embrechts2002selfsimilar}, \cite{pipiras2017long}). Furthermore, if $X$ is nontrivial and right continuous in law (meaning that for every $t_0>0$, $X(t) \to^d X(t_0)$ as $t \downarrow t_0$), then there exists a unique $H\in \R$ called the Hurst parameter such that $m(\lambda)=\lambda^H$ (see \cite[Section 8.5.]{bingham1989regular} and the references therein). We shortly say $X$ is $H$-ss. Typically $\T=(0,\infty)$ and if $\T=[0,\infty)$, then $H\geq 0$. Furthermore, $H>0$ implies $X(0)=0$ a.s.~and $H=0$ if and only if $X(t)=X(0)$ a.s.~for every $t>0$ (\cite{embrechts2002selfsimilar}).

In this setting, \eqref{mfdef:motivating} corresponds to a concept of marginally self-similar process as defined in \cite[Section 8.5.]{bingham1989regular}. Adopting this terminology, a process satisfying \eqref{mfdef:prop} only for the one-dimensional distributions may be referred to as marginally multifractal. Note that this is equivalent to \eqref{mfdef:motivating} when $\mS=\T$.
\end{example}

\subsection{The set $\Lambda$}

We start our analysis of Definition \ref{def:mf} by investigating how the scaling property \eqref{mfdef:prop} affects the size of the set $\Lambda$. It is known that the scaling of moments \eqref{momscal:motivating} cannot hold for every $t>0$ with $\tau$ being nonlinear (see e.g.~\cite{mandelbrot1997mmar} and \cite{muzy2013random}). In our setting this correspond to restrictions on the set $\Lambda$. 

We start by showing that if the scaling is deterministic, then typically $\Lambda=(0,\infty)$ and $\mS=\T$.

\begin{proposition}\label{prop:LamSwhendeterministic}
Suppose $X=\{X(t), \, t \in \T\}$ is a multifractal process such that $M(\lambda,t)=m(\lambda)$ is deterministic for every $\lambda \in \Lambda$. If $\Lambda$ contains an interval, then \eqref{mfdef:prop} holds for any $\lambda\in (0,\infty)$. If additionally $\mS$ contains an interval, then \eqref{mfdef:prop} holds with $\mS=\T$, hence $X$ is self-similar.
\end{proposition}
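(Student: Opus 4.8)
The plan is to leverage the Cauchy-type functional equation that the map $\lambda \mapsto m(\lambda)$ must satisfy. First I would fix $t \in \mS$ with $P(X(t) \neq 0) > 0$ and note that applying \eqref{mfdef:prop} at the single time point $t$ gives $X(\lambda t) \overset{d}{=} m(\lambda) X(t)$ for every $\lambda \in \Lambda$; in particular the one-dimensional marginals satisfy a deterministic scaling. Composing two scalings $\lambda_1, \lambda_2 \in \Lambda$ with $\lambda_1 \lambda_2 \in \Lambda$ (which is available because $\Lambda$ contains an interval, so products of nearby scales stay in $\Lambda$) yields $m(\lambda_1 \lambda_2) = m(\lambda_1) m(\lambda_2)$ on a suitable subset. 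The standard way to get multiplicativity globally is to first extend $\Lambda$: if \eqref{mfdef:prop} holds for $\lambda_1$ and $\lambda_2$ then it holds for $\lambda_1/\lambda_2$ as well (rescale $t \mapsto t/\lambda_2$ inside $\mS$, using that $\mS$ can be taken invariant under the relevant maps, or restrict to a sub-time-set), so the set of valid scales is closed under products and quotients, i.e.\ it is a subgroup of $((0,\infty),\cdot)$ containing an interval, hence all of $(0,\infty)$.

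Second, with the multiplicative Cauchy equation $m(\lambda_1\lambda_2) = m(\lambda_1)m(\lambda_2)$ now holding for all $\lambda_1,\lambda_2 > 0$, I would invoke measurability/regularity of $\lambda \mapsto m(\lambda)$ to conclude $m(\lambda) = \lambda^H$ for some $H \in \R$. The regularity comes for free: since $m(\lambda)^q = \E|X(\lambda t)|^q / \E|X(t)|^q$ whenever a finite $q$-th moment exists, or alternatively one can read off $m(\lambda)$ from characteristic functions, $\varphi_{X(\lambda t)}(u) = \varphi_{X(t)}(m(\lambda) u)$, which forces $\lambda \mapsto m(\lambda)$ to be monotone-in-a-parameter / measurable on the interval contained in $\Lambda$; a measurable solution of the multiplicative Cauchy equation on $(0,\infty)$ is a power function. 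This identifies the scaling exponent and shows \eqref{mfdef:prop} at the level of one-dimensional distributions holds for every $\lambda \in (0,\infty)$.

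Third, to upgrade from one-dimensional to full finite-dimensional distributions — the second assertion, that $\mS = \T$ — I would argue as follows. Fix $t_1,\dots,t_n \in \T$. Since $\mS$ contains an interval $I$, for a fixed reference time $s \in I$ and each $i$ there is a scale $\lambda_i = t_i/s$ such that $t_i = \lambda_i s$; choosing $s$ close enough to the $t_i$'s is not possible in general, so instead I would use a chaining/bootstrapping argument: the relation $\{X(\lambda t)\}_{t \in \mS} \overset{d}{=} \{\lambda^H X(t)\}_{t \in \mS}$ for all $\lambda$ (once established on the interval and then propagated) lets one translate an $n$-tuple in $\T$ back into an $n$-tuple in $\mS$ by a common rescaling, provided $\mS$ absorbs a neighborhood after scaling. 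Concretely, pick $\lambda^*$ so that $\lambda^* t_1, \dots, \lambda^* t_n$ all lie in the interval $I \subset \mS$ (possible since $I$ is an interval of positive length and we may also shrink by iterating the scaling if $\T$ is unbounded; if $\T = [0,T]$ one scales down). Then $(X(t_1),\dots,X(t_n)) = (X(\lambda^{*-1}(\lambda^* t_1)),\dots) \overset{d}{=} (\lambda^{*-H} X(\lambda^* t_1),\dots,\lambda^{*-H}X(\lambda^* t_n))$, and the right-hand tuple involves only points of $\mS$, so the joint law is pinned down; combined with the single-time scaling $m(\lambda)=\lambda^H$ this gives $\{X(\lambda t)\}_{t \in \T} \overset{d}{=} \{\lambda^H X(t)\}_{t\in\T}$ for all $\lambda$, i.e.\ self-similarity.

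The main obstacle I anticipate is the bookkeeping around the time sets: making precise that one can rescale an arbitrary finite tuple of times into $\mS$ (and that the intermediate scales needed are themselves in the now-enlarged $\Lambda = (0,\infty)$), handling the boundary cases $\T = (0,\infty)$ versus $\T = (0,T]$ cleanly, and ensuring the passage from "multiplicative Cauchy on an interval" to "on all of $(0,\infty)$" is airtight — in particular that the subgroup generated by an interval in $((0,\infty),\cdot)$ is everything, and that $m$ inherits enough measurability from the distributional equality to rule out pathological Hamel-basis solutions. The rest is routine once the functional equation $m(\lambda)=\lambda^H$ is in hand.
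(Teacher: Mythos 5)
Your first part follows the paper's own route: the set of scales for which \eqref{mfdef:prop} holds is a multiplicative group (closed under products and inverses), it contains an interval, hence it is all of $(0,\infty)$, and composing scalings gives $m(\lambda_1\lambda_2)=m(\lambda_1)m(\lambda_2)$. Two problems arise after that. First, the detour through $m(\lambda)=\lambda^H$ is both unjustified and unnecessary. The proposition assumes no finite moments and no continuity in law, so neither $m(\lambda)^q=\E|X(\lambda t)|^q/\E|X(t)|^q$ nor any measurability of $\lambda\mapsto m(\lambda)$ ``comes for free''; this is precisely why the paper proves only multiplicativity at this stage (using the cancellation result \cite[Lemma 1.1.1]{embrechts2002selfsimilar} applied to a $t$ with $P(X(t)\neq 0)>0$) and defers the identification $m(\lambda)=\lambda^H$ to Example 2.1, where right-continuity in law is additionally assumed. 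This blemish is repairable, since nothing downstream actually needs the power form: replace $\lambda^{*-H}$ by $1/m(\lambda^*)$ throughout.

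Second, and more seriously, the concrete version of your last step fails. You propose to ``pick $\lambda^*$ so that $\lambda^* t_1,\dots,\lambda^* t_n$ all lie in the interval $I\subset\mS$.'' A common rescaling preserves the ratios $t_i/t_j$, while a bounded interval $I=(a,b)$ only contains points whose mutual ratios are below $b/a$; so no such $\lambda^*$ exists once $\max_i t_i/\min_i t_i\geq b/a$, and ``iterating the scaling'' cannot help because iteration still preserves ratios. The paper's argument goes in the opposite direction: instead of pulling an arbitrary tuple back into $\mS$, it pushes the scaling relation outward, using multiplicativity of $m$ to show that \eqref{mfdef:prop} holds over every dilate $\alpha\mS$, $\alpha>0$, via
\begin{equation*}
\{X(\lambda\alpha s)\}_{s\in\mS}\overset{d}{=}\{m(\lambda\alpha)X(s)\}_{s\in\mS}=\{m(\lambda)m(\alpha)X(s)\}_{s\in\mS}\overset{d}{=}\{m(\lambda)X(\alpha s)\}_{s\in\mS}.
\end{equation*}
A finite tuple then only has to sit inside some dilate of the \emph{whole} set $\mS$ (which, for the sets the paper works with, such as $\mS=(0,T]$, absorbs any finite tuple), not inside a dilate of the interval $I$. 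Your chaining remark gestures at the right idea, but the step it ultimately rests on is impossible as stated.
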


\begin{proof}
Let $\Lambda'$ denote the set of $\lambda$ for which \eqref{mfdef:prop} holds. If $\lambda \in \Lambda'$, then from $\{X(t)\}_{t \in\mS} \overset{d}{=} \{m(\lambda) X(t/\lambda)\}_{t \in\mS}$ we have $\{X(t/\lambda)\}_{t \in\mS} \overset{d}{=} \{1/m(\lambda) X(t)\}_{t \in\mS}$ implying that $1/\lambda \in \Lambda'$. Furthermore, if $\lambda_1, \lambda_2 \in \Lambda'$, then since 
\begin{equation}\label{prop1proofmultiplciative}
\{X(\lambda_1 \lambda_2 t)\}_{t \in\mS} \overset{d}{=} \{m(\lambda_1) X(\lambda_2  t)\}_{t \in\mS}\overset{d}{=} \{m(\lambda_1) m(\lambda_2) X(t)\}_{t \in\mS},
\end{equation}
we have $\lambda_1 \lambda_2 \in \Lambda'$. Hence, $\Lambda'$ is a multiplicative subgroup of $(0,\infty)$ containing $\Lambda$. Since it has positive measure it must be $\Lambda'=(0,\infty)$ (see e.g.~\cite[Corollary 1.1.4]{bingham1989regular}).

Let $t \in \mS$ be such that $P(X(t)\neq 0)>0$. Such $t$ exists, as if $X(t)=0$ a.s.~for every $t\in\mS$, then $X$ is identically null since $X(s)\overset{d}{=} m(s/t) X(t)$ for every $s\in\T$. By using \cite[Lemma 1.1.1]{embrechts2002selfsimilar} we conclude from \eqref{prop1proofmultiplciative} that $m(\lambda_1 \lambda_2)=m(\lambda_1)m(\lambda_2)$.

We now show that \eqref{mfdef:prop} can be extended to $\mS'$ of the form $\mS'= \alpha \mS=\{\alpha s : s \in \mS\}$ for any $\alpha>0$. Indeed, for every $\lambda \in (0,\infty)$ we have
\begin{align*}
\{ X(\lambda t) \}_{t \in \mS'} &= \{ X(\lambda \alpha t/\alpha) \}_{t \in \mS'} = \{ X(\lambda \alpha s) \}_{s \in \mS} \overset{d}{=} \{ m(\lambda \alpha) X(s) \}_{s \in \mS} \\
&= \{ m(\lambda) m(\alpha) X(s) \}_{s \in \mS} \overset{d}{=} \{ m(\lambda) X(\alpha s) \}_{s \in \mS} \overset{d}{=} \{ m(\lambda) X(t) \}_{t \in \mS'},
\end{align*}
which proves the claim.
\end{proof}

Proposition \ref{prop:LamSwhendeterministic} shows that typically, deterministic scaling may be extended to any scale $\lambda \in (0,\infty)$. The next proposition provides a sort of converse showing that the random scaling cannot hold for any scale since then it reduces to deterministic scaling. To show this, we introduce a further assumption on the process and make use of the Mellin transform (see Appendix \ref{appendix:mellin}).

For the argument bellow, we would need to assume that the domain of the Mellin transform applied to $X(t)$ does not degenerate into imaginary axis. For the process $X=\{X(t), \, t\in \T\}$, let
\begin{equation*}%\label{qoverline}
  \overline{q} \left(X\right) = \sup \left\{ q \geq 0 : \E|X(t)|^q < \infty \text{ for all } t \in \T \right\}.
\end{equation*}
We will assume that $\overline{q}\left(X\right)>0$, so that for every $t\in \T$ the Mellin transform of $X(t)$ is defined at least for $0 \leq \Re z < \overline{q} \left(X\right)$. It is worth noting that this is a very mild assumption.

\begin{proposition}\label{prop:nomfgen}
Suppose $X=\{X(t), \, t \in \T\}$ is a multifractal process with $\overline{q} \left(X\right)>0$. If $\Lambda=(0,\infty)$, then $M(\lambda,t)=m(\lambda)$ is a.s.~a constant for any $\lambda \in \Lambda'$, $\Lambda'=\{\lambda \in \Lambda : \text{ there exists } t \in \mS \text{ such that }\lambda t \in \mS\}$. In particular, if $\mS=\T$, then $X$ is self-similar.
\end{proposition}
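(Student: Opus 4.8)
The plan is to exploit the multiplicative structure of the scaling factors together with the Mellin transform, which linearizes the relation $X(\lambda t) \overset{d}{=} M(\lambda,t)X(t)$ into an additive functional equation. Fix $\lambda \in \Lambda'$ and choose $t \in \mS$ with $\lambda t \in \mS$; by the nontriviality assumptions we may also arrange $P(X(t)\neq 0)>0$ and, arguing as in the proof of Proposition 2.1 (that $X$ is not identically null), we can take such a $t$ for which $\E|X(t)|^q<\infty$ for $0\le \Re z<\overline q(X)$ with $X(t)$ not a.s.\ zero. Since $M(\lambda,\cdot)$ is independent of the process, taking Mellin transforms in the one-dimensional marginal equality $X(\lambda t)\overset{d}{=}M(\lambda,t)X(t)$ gives a factorization
\begin{equation*}
  \mathcal{M}_{X(\lambda t)}(z) = \mathcal{M}_{M(\lambda,t)}(z)\,\mathcal{M}_{X(t)}(z)
\end{equation*}
valid on the strip $0\le \Re z<\overline q(X)$ (here one uses $\overline q(X)>0$ so the strip is nondegenerate, and that $|X(t)|^q$ being integrable for the relevant $q$ forces the same for $M(\lambda,t)|X(t)|^q$, hence a finite-moment range for $M(\lambda,t)$). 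Because $\{M(\lambda,s):s\in\mS\}$ are identically distributed, $\mathcal{M}_{M(\lambda,t)}(z)$ does not depend on $t$; write it $\mu_\lambda(z)$.

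Next I would set up the key functional equation. For $\lambda\in(0,\infty)$ arbitrary and $t>0$, iterate the scaling: comparing $X(\lambda_1\lambda_2 t)$ scaled from $t$ directly versus via $\lambda_2 t$ (exactly as in \eqref{prop1proofmultiplciative}, but now keeping track of the random factors and their Mellin transforms), one obtains the cocycle-type identity $\mu_{\lambda_1\lambda_2}(z)=\mu_{\lambda_1}(z)\mu_{\lambda_2}(z)$ on the strip, for those scale pairs that keep the relevant time points inside $\mS$ — and since $\Lambda=(0,\infty)$ this can be bootstrapped to all of $(0,\infty)$. Thus for each fixed $z$ in the strip, $\lambda\mapsto \mu_\lambda(z)$ is a multiplicative function of $\lambda\in(0,\infty)$. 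Under the mild regularity coming from $X$ (e.g.\ measurability, or right-continuity in law if one wishes to invoke it), such a multiplicative function must be a power: $\mu_\lambda(z)=\lambda^{\kappa(z)}$ for some function $\kappa$, using the standard Cauchy-type functional equation result (the same Corollary 1.1.4 / Lemma 1.1.1 of \cite{bingham1989regular}, \cite{embrechts2002selfsimilar} cited above).

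The final step is to argue that $\mu_\lambda(z)=\lambda^{\kappa(z)}$ being a pure exponential forces $M(\lambda,t)$ to be degenerate. The point is that $z\mapsto \mu_\lambda(z)=\E|M(\lambda,t)|^z$ is the Mellin transform of a positive random variable; a Mellin transform of the form $z\mapsto \lambda^{\kappa(z)}$ corresponds to a deterministic variable precisely when $\kappa$ is affine in $z$, i.e.\ $\kappa(z)=c z$, in which case $M(\lambda,t)=\lambda^c$ a.s. To pin this down I would use convexity: $\log\E|M(\lambda,t)|^z$ is convex in $z$ (Hölder / Lyapunov), and $\mathrm{Var}(\log M)>0$ would make it strictly convex on the relevant strip, contradicting additivity of $\kappa$ across the multiplicative group — more carefully, combining $\mu_{\lambda_1\lambda_2}=\mu_{\lambda_1}\mu_{\lambda_2}$ with $\mu_\lambda(0)=1$ and the analyticity of Mellin transforms on the open strip, one shows $\kappa$ must be linear, hence $M(\lambda,t)\equiv m(\lambda):=\lambda^{\kappa'(0)}$ is a.s.\ constant. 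Then \eqref{mfdef:prop} with $\mS=\T$ and deterministic factors is exactly self-similarity, giving the last assertion. I expect the main obstacle to be the careful justification of the last step — ruling out non-degenerate positive $M$ whose Mellin transform happens to be multiplicative in $\lambda$ — which requires using analyticity/convexity of the Mellin transform on a genuine strip (this is where the hypothesis $\overline q(X)>0$ is essential) rather than just its values on the imaginary axis; the bookkeeping of which time points stay in $\mS$ when composing scales is a secondary but necessary annoyance.
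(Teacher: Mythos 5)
Your proposal heads in the right general direction (Mellin transforms, degeneracy of the scaling factors) but it replaces the paper's short local argument with a global functional-equation argument that does not work under the stated hypotheses. The paper's proof is much more economical: for a fixed $\lambda\in\Lambda'$ pick a single $t\in\mS$ with $\lambda t\in\mS$; since $\Lambda=(0,\infty)$ contains $1/\lambda$, one can scale back and forth between these two time points, giving $\mathcal{M}_{M(1/\lambda,\lambda t)}(z)\,\mathcal{M}_{M(\lambda,t)}(z)=1$ on the nondegenerate real interval $(0,\overline{q}(X))$. By uniqueness of the Mellin transform this says the product of two \emph{independent} positive random variables is a.s.\ equal to $1$, and independence then forces both factors to be a.s.\ constant. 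No Cauchy equation, no regularity in $\lambda$, and no assumption on $\mS$ beyond the existence of one admissible pair is needed.

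Your route has two genuine gaps. First, the cocycle identity $\mu_{\lambda_1\lambda_2}(z)=\mu_{\lambda_1}(z)\mu_{\lambda_2}(z)$ requires a time point $t$ with both $t$ and $\lambda_2 t$ in $\mS$; Proposition \ref{prop:nomfgen} makes no assumption on $\mS$ (the standing assumption $\lambda\mS\subset\mS$ is only introduced afterwards), so $\mS$ could be, say, a two-point set $\{t_0,\lambda_0 t_0\}$, in which case $\Lambda'$ is finite and there is no multiplicative equation over $(0,\infty)$ to bootstrap or to solve — yet the proposition still asserts degeneracy of $M(\lambda_0,\cdot)$. Second, and more seriously, your final step as stated is false: ``$\mu_{\lambda_1\lambda_2}=\mu_{\lambda_1}\mu_{\lambda_2}$ together with $\mu_\lambda(0)=1$ and analyticity forces $\kappa$ linear'' is contradicted by the Lévy-based scaling families of Section \ref{sec4}, where $\mu_\lambda(z)=\lambda^{-\psi(z)}$ with $\psi$ a (strictly convex) Laplace exponent satisfies all of these. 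What actually forces linearity is the presence of \emph{inverse} scales: if the power law held for all $\lambda\in(0,\infty)$, then $z\mapsto\kappa(z)\log\lambda$ would have to be convex both for $\lambda>1$ and for $\lambda<1$, hence $\kappa$ would be linear. That inverse is precisely the ingredient the paper isolates and uses directly; your write-up never invokes it explicitly, which is why the degeneracy step you flag as ``the main obstacle'' cannot be closed along the lines you describe.
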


\begin{proof}
Recall that we have assumed that for every process considered, there is $t\in \T$, $t\neq 0$, such that $P(X(t)\neq 0)>0$. Since $\{X(t)\}$ is multifractal, then this is true for every $\lambda t$, $\lambda \in \Lambda$, $t\in\mS$. There is at least one $t\in \mS$ such that $P(X(t)\neq 0)>0$ as otherwise $X$ would be identically null because of $X(s)\overset{d}{=} M(s/t, t) X(t)$. Since $\Lambda=(0,\infty)$ we conclude that $P(X(t)\neq 0)>0$ for every $t\in\T$, $t\neq 0$.

Let $\lambda \in \Lambda'$ and take $t \in \mS$ such that $\lambda t \in \mS$. Then $1/\lambda\in \Lambda$ and from $X(t)\overset{d}{=} M(1/\lambda,\lambda t) X(\lambda t)$ and $X(\lambda t)\overset{d}{=} M(\lambda,t) X(t)$ we have
\begin{align*}
  \mathcal{M}_{|X(t)|}(z) &= \mathcal{M}_{M(1/\lambda, \lambda t)}(z) \mathcal{M}_{|X(\lambda t)|}(z), \\
  \mathcal{M}_{|X(\lambda t)|}(z) &= \mathcal{M}_{M(\lambda, t)}(z) \mathcal{M}_{|X(t)|}(z).
\end{align*}
and
\begin{equation*}
  \mathcal{M}_{|X(t)|}(z) = \mathcal{M}_{M(1/\lambda,\lambda t)}(z) \mathcal{M}_{M(\lambda,t)}(z) \mathcal{M}_{|X(t)|}(z).
\end{equation*}
Here $\mathcal{M}_X$ denotes the Melilin transform of the random variable $X$ (see Appendix \ref{appendix:mellin}). Since $P(|X(t)| > 0)>0$, for real $z \in (0,\overline{q} \left(X\right))$ we have that $\mathcal{M}_{|X(t)|}(z) =\E|X(t)|^z >0$  and hence
\begin{equation}\label{e:proofp2:1}
  \mathcal{M}_{M(1/\lambda, \lambda t)}(z) \mathcal{M}_{M(\lambda,t)}(z) = 1.
\end{equation}
This uniquely determines the distribution (see Appendix \ref{appendix:mellin}) and if we have independent random variables $M_1\overset{d}{=}M(1/\lambda,\lambda t)$ and $M_2\overset{d}{=}M(\lambda,t)$, then \eqref{e:proofp2:1} implies that $M_1 M_2 = 1$ a.s. This is impossible unless $M_1$ and $M_2$ are constants a.s., so we conclude $M(\lambda,t)$ is a.s.~a constant. Due to \eqref{Mstatmarg}, $M(\lambda,t)=m(\lambda)$ for every $t\in \mS$. If $\mS=\T$, then, as we have assumed, $\mS$ is of the form $(0,\infty)$, $[0,\infty)$, $(0,T]$ or $[0,T]$. All of these imply $\Lambda'=(0,\infty)$, hence it follows that the process is self-similar.
\end{proof}

\begin{remark}
For $\Lambda'=\Lambda$ to hold in Proposition \ref{prop:nomfgen}, it is enough that $(0,u)\subset \mS$ or that $(u,\infty)\subset \mS$ for some $u>0$.
%The assumption on the set $\mS$ in Proposition \ref{prop:nomfgen} is clearly satisfied whenever $(0,1) \subset \mS$ or $(1, \infty) \subset \mS$. This covers all the examples appearing in this paper.
\end{remark}

\begin{remark}
Note that without further assumptions $Y_1 Z \overset{d}{=} Y_2 Z$ with $Z$ positive and independent of $Y_1$ and $Y_2$ does not necessarily imply that $Y_1\overset{d}{=} Y_2$. Indeed, in \cite[p.~506]{feller1971introduction}, one can find example of random variables such that $\widetilde{Y}_1+\widetilde{Z} \overset{d}{=} \widetilde{Y}_2 + \widetilde{Z}$ with $\widetilde{Z}$ independent of $\widetilde{Y}_1$ and $\widetilde{Y}_2$, but $\widetilde{Y}_1$ and $\widetilde{Y}_2$ do not have the same distribution. By taking exponentials one gets the counterexample for the product (see also \cite[Exercise 1.12.]{chaumont2012exercises}). This explains the assumption $\overline{q} \left(X\right)>0$ in Proposition \ref{prop:nomfgen}. We note that this condition can be replaced with the appropriate condition on moments of negative order. More precisely, we can assume $\underline{q}\left(X\right)<0$ where
\begin{equation}\label{qunderline}
  \underline{q} \left(X\right) = \inf \left\{ q \leq 0 : \E|X(t)|^q < \infty \text{ for all } t \in \T \right\}.
\end{equation}
\end{remark}

The crucial property for the proof of Proposition \ref{prop:nomfgen} is that every $\lambda \in \Lambda$ has its inverse element $1/\lambda$ in $\Lambda$. To enable random scaling, one has to consider $\Lambda$ being monoid and not group under multiplication. Thus, we will have in general two distinct classes of multifractal processes depending on whether $\Lambda=(0,1]$ or $\Lambda=[1,\infty)$. In \cite{veneziano1999basic}, the processes defined by the variation of property \eqref{mfdef:motivatingfdd} are referred to as contraction (resp.~dilation) stochastically self-similar in the case that corresponds to our $\Lambda=(0,1]$ (resp.~$\Lambda=[1,\infty)$).

\subsection{The set $\mS$}

To enable random scaling one has to make restrictions on the set $\mS$ too. Indeed, the next proposition shows that if $\Lambda=(0,1]$ and $[1,\infty) \subset \mS$, or if $\Lambda=[1,\infty)$ and $(0,1] \subset \mS$, then the scaling is necessarily deterministic and reduces to self-similarity. In particular, there is no random scaling if $\mS=(0,\infty)$. To prove this we will assume that the process $X$ under consideration defined on the probability space $(\Omega, \mathcal{F}, P)$ is jointly measurable, i.e.~$(t, \omega) \mapsto X(t,\omega)$ is $\mathcal{B}(\T) \times \mathcal{F}$-measurable (see also Remark \ref{rem:cauchyfuneqproof}).

\begin{proposition}\label{prop:nomfgenonS}
Suppose $\T=(0,\infty)$ or $\T=[0,\infty)$ and $X=\{X(t), \, t \in \T\}$ is a jointly measurable multifractal process such that $\overline{q} \left(X\right)>0$ and $\Lambda=(0,1]$ or $\Lambda=[1,\infty)$. If $\overline{\Lambda} \subset \mS$, $\overline{\Lambda}:=\{1/\lambda : \lambda\in \Lambda \}$, then $X$ is self-similar.
\end{proposition}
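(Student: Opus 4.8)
The plan is to leverage Proposition \ref{prop:nomfgen} by upgrading the scaling relation from the monoid $\Lambda$ to the full group $(0,\infty)$, after which Proposition \ref{prop:nomfgen} immediately forces deterministic scaling and hence self-similarity. The key observation is that $\overline{\Lambda}\subset\mS$ gives us exactly the overlap needed to compose scalings and their (now available) inverses. Concretely, suppose $\Lambda=(0,1]$; the case $\Lambda=[1,\infty)$ is symmetric. For $\lambda\in\Lambda$ we have $1/\lambda\in\overline{\Lambda}\subset\mS$, and the defining relation \eqref{mfdef:prop} at time $t=1/\lambda\in\mS$ reads $X(\lambda\cdot(1/\lambda))=X(1)\overset{d}{=}M(\lambda,1/\lambda)X(1/\lambda)$, relating $X(1)$ to $X(1/\lambda)$ for arbitrarily large arguments $1/\lambda$. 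Using these relations we can build, for every $\mu>1$, a random variable $N_\mu$ independent of $X$ with $X(\mu t)\overset{d}{=}N_\mu X(t)$ for $t$ in a suitable set, so that the effective set of scales becomes a multiplicative subgroup of $(0,\infty)$.

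The main technical step is a Mellin-transform / Cauchy-functional-equation argument, parallel to the proof of Proposition \ref{prop:nomfgen}. Fix $t,s\in\mS$ with $\lambda_1 t=s$ for some $\lambda_1\in\Lambda$, and suppose also $\lambda_2 s\in\mS$ for some $\lambda_2\in\Lambda$; chaining \eqref{mfdef:prop} twice gives $X(\lambda_1\lambda_2 t)\overset{d}{=}M(\lambda_2,\lambda_1 t)M(\lambda_1,t)X(t)$ with the two factors independent of $X$ and (by the independence structure in the definition) of each other. Taking Mellin transforms of absolute values over the real interval $z\in(0,\overline q(X))$, where $\E|X(t)|^z>0$, yields
\begin{equation*}
\mathcal{M}_{|X(\lambda_1\lambda_2 t)|}(z)=\mathcal{M}_{M(\lambda_2,\lambda_1 t)}(z)\,\mathcal{M}_{M(\lambda_1,t)}(z)\,\mathcal{M}_{|X(t)|}(z).
\end{equation*}
Because $\overline{\Lambda}\subset\mS$ supplies enough points where consecutive scalings land back in $\mS$, one can iterate this to reach $\lambda$-values that are dense in a neighbourhood of $1$, and together with the inversion step (using that $1/\lambda\in\mS$ so that $X(1)\overset{d}{=}M(\lambda,1/\lambda)X(1/\lambda)$ can be inverted in Mellin space) we obtain, as in \eqref{e:proofp2:1}, a product of two Mellin transforms of independent positive random variables equal to $1$. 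By the uniqueness of the Mellin transform (Appendix \ref{appendix:mellin}) this forces the product of the corresponding independent random variables to be constant, hence each factor $M(\lambda,t)$ is a.s.\ constant; by \eqref{Mstatmarg} it equals some $m(\lambda)$ independent of $t\in\mS$.

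Once the scaling is deterministic, $m$ satisfies $m(\lambda_1\lambda_2)=m(\lambda_1)m(\lambda_2)$ on the relevant set of scales, and the set $\Lambda'$ of scales for which \eqref{mfdef:prop} holds is a multiplicative subgroup of $(0,\infty)$ containing an interval (since $\Lambda=(0,1]$ or $[1,\infty)$ does), hence equals $(0,\infty)$ by \cite[Corollary 1.1.4]{bingham1989regular}, exactly as in the proof of Proposition \ref{prop:LamSwhendeterministic}. One then extends the equality of finite-dimensional distributions from $\mS$ to all of $\T$ by the same $\mS\mapsto\alpha\mS$ translation-of-scale argument used there, using that $\mS\supseteq\overline{\Lambda}$ already contains an interval. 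This yields self-similarity.

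The step I expect to be the main obstacle is the measurability/regularity needed to make the Cauchy functional equation argument for $m$ (and for the passage from the monoid of realized scales to the full group) rigorous — this is precisely why joint measurability of $X$ is assumed, and the cross-reference to Remark \ref{rem:cauchyfuneqproof} signals that a measurable solution of the multiplicative Cauchy equation is a power function. A secondary subtlety is bookkeeping the independence of the two scaling factors $M(\lambda_2,\lambda_1 t)$ and $M(\lambda_1,t)$ when composing, and checking that $\overline{\Lambda}\subset\mS$ genuinely provides enough admissible pairs $(t,\lambda_1 t)\in\mS\times\mS$ — here the inclusion $[1,\infty)\subset\mS$ (resp.\ $(0,1]\subset\mS$) does the work, since then for any $t\in\mS$ with $t$ large (resp.\ small) and any $\lambda\in\Lambda$ one still has $\lambda t\in\mS$.
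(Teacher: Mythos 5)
Your overall architecture (reduce to deterministic scaling, then invoke Proposition \ref{prop:LamSwhendeterministic}) matches the paper's endgame, but the route you propose to reach determinism has a genuine gap, and it is located exactly where you flag "the inversion step." From $X(1)\overset{d}{=}M(\lambda,1/\lambda)X(1/\lambda)$ with $M(\lambda,1/\lambda)$ independent of $X(1/\lambda)$ you can divide Mellin transforms to get $\mathcal{M}_{|X(1/\lambda)|}(z)=\mathcal{M}_{|X(1)|}(z)/\mathcal{M}_{M(\lambda,1/\lambda)}(z)$, but the reciprocal $1/\mathcal{M}_{M(\lambda,1/\lambda)}(z)$ need not be the Mellin transform of any positive random variable; indeed, if some independent positive $N$ satisfied $\mathcal{M}_N(z)\mathcal{M}_{M(\lambda,1/\lambda)}(z)=1$ on a real interval, then $NM(\lambda,1/\lambda)=1$ a.s.\ and both factors would already be degenerate. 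So the identity of the form \eqref{e:proofp2:1} that you want to reproduce is available only when both $\lambda$ and $1/\lambda$ lie in $\Lambda$, i.e.\ when $\Lambda$ is a group — which is precisely the situation of Proposition \ref{prop:nomfgen} and precisely what the standing assumption $\Lambda=(0,1]$ or $[1,\infty)$ rules out. Your plan of "upgrading the monoid of scales to the full group and then applying Proposition \ref{prop:nomfgen}" is therefore circular: the inverse scaling factors exist (as independent multiplicative factors) only after one knows the scaling is deterministic. If the upgrade worked as described, every multifractal process with $\Lambda=(0,1]$ would be self-similar irrespective of $\mS$, contradicting the cascades and the $L$-multifractals of Section \ref{sec4}.

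The missing idea is that the hypothesis $\overline{\Lambda}\subset\mS$ is exploited at the level of \emph{moments}, not of distributions. Writing $f_q(\lambda)=\E|M(\lambda,t)|^q$ and $g_q(t)=\E|X(t)|^q/\E|X(1)|^q$, the anchoring points $1/\lambda\in\mS$ let one realize every ratio $g_q(t_1)$ as $f_q(\lambda t_1)/f_q(\lambda)$ and deduce the multiplicative Cauchy equation $g_q(t_1)g_q(t_2)=g_q(t_1t_2)$ for \emph{all} $t_1,t_2\in(0,\infty)$; joint measurability then gives $g_q(t)=t^{\tau(q)}$ on all of $(0,\infty)$. The decisive step — absent from your proposal — is that because this power law holds both as $t\to0$ and as $t\to\infty$, H\"older's inequality forces $\tau$ to be simultaneously concave and convex, hence linear. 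Linearity of $\tau$ makes $\mathcal{M}_{M(\lambda,1)}(q)=\lambda^{\tau(q)}$ coincide on a real interval with the Mellin transform of a constant, and only then does Mellin uniqueness yield that each $M(\lambda,t)$ is a.s.\ constant, after which Proposition \ref{prop:LamSwhendeterministic} finishes the argument. Your identification of the role of joint measurability (Remark \ref{rem:cauchyfuneqproof}) and of the admissible pairs supplied by $\overline{\Lambda}\subset\mS$ is correct, but without the two-sided H\"older/linearity argument there is no mechanism in your proof that actually kills the randomness of $M(\lambda,t)$.
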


\begin{proof}
First note that $P(X(t)\neq 0)>0$ for every $t \in \mS$. Indeed, let $s \in \mS$ be such that $P(X(s)\neq 0)>0$. Such $s$ exists, as if $X(s)=0$ a.s.~for every $s\in\mS\supset \overline{\Lambda}$, then from $X(\lambda)\overset{d}{=} M(\lambda,1) X(1)$ we would have $X(s)=0$ a.s.~for every $s>0$ and $X$ would be identically null. For arbitrary $t\in \mS$, either $s/t \in \Lambda$ or $t/s \in \Lambda$. From $X(s)\overset{d}{=} M(s/t,t) X(t)$ or from $X(t)\overset{d}{=} M(t/s,s) X(s)$ we then conclude $P(X(t)\neq 0)>0$.

Let $q \in (0,\overline{q} \left(X\right))$. Given $t_1, t_2 \in (0,\infty)$, take $\lambda \in \Lambda$ such that $\lambda t_1, \lambda t_1 t_2 \in \Lambda$ and hence, by the assumptions, $1/(\lambda t_1) \in \mS$. Since $1/\lambda \in \mS$ we have from \eqref{mfdef:prop} that
\begin{equation}\label{prop:nomfgenonS:proof:eq1}
\begin{aligned}
X(t_1) &\overset{d}{=} M(\lambda t_1, 1/\lambda) X(1/\lambda),\\
X(1) &\overset{d}{=} M(\lambda, 1/\lambda) X(1/\lambda).
\end{aligned}
\end{equation}
By denoting
\begin{align*}
f_q(\lambda) &= \E |M(\lambda, t) |^q, \quad \lambda \in \Lambda,\\
g_q(t) &= \frac{\E |X(t) |^q}{\E |X(1) |^q}, \quad t \in \T,
\end{align*}
and using $\E|X(1/\lambda)|^q>0$, we get from \eqref{prop:nomfgenonS:proof:eq1} that
\begin{equation*}
\frac{f_q(\lambda t_1)}{f_q(\lambda)} = g_q(t_1).
\end{equation*}
Similarly, since $\lambda t_1, \lambda t_1 t_2 \in \Lambda$ and $1/(\lambda t_1) \in \mS$ we have
\begin{equation*}
\frac{f_q(\lambda t_1 t_2 )}{f_q(\lambda t_1)} = g_q(t_2) \quad \text{ and } \quad \frac{f_q(\lambda t_1 t_2 )}{f_q(\lambda)} = g_q(t_1 t_2).
\end{equation*}
We conclude that
\begin{equation}\label{prop:proof:cauchyfun}
g_q(t_1)  g_q(t_2) = \frac{f_q(\lambda t_1)}{f_q(\lambda)} \frac{f_q(\lambda t_1 t_2 )}{f_q(\lambda t_1)}  =  g_q(t_1 t_2)
\end{equation}
for any $t_1, t_2 \in (0,\infty)$. The joint measurability and Fubini's theorem imply that $t\mapsto \E |X(t) |^q$, and hence $t\mapsto g_q(t)$ is measurable. Hence, for each $q \in (0,\overline{q} \left(X\right))$, there is $\tau(q)\in \R$ such that $g_q(t)=t^{\tau(q)}$ for $t\in (0,\infty)$.

We now show that $q \mapsto \tau(q)$ is a linear function (see \cite{mandelbrot1997mmar}). Let $q_1,q_2 \in (0, \overline{q}(X))$, $w_1, w_2 \geq 0$, $w_1+w_2=1$ and put $q=q_1 w_1 + q_2 w_2$. From H\"older's inequality we have that
\begin{equation*}
  \E|X(t)|^q \leq \left( \E|X(t)|^{q_1} \right)^{w_1} \left( \E|X(t)|^{q_2} \right)^{w_2}
\end{equation*}
and by taking logarithms
\begin{align*}
&\tau(q) \log t + \log \E |X(1) |^q \\
&\hspace{3em} \leq \left( w_1 \tau(q_1) + w_2 \tau(q_2) \right) \log t  +  w_1 \log \E |X(1) |^{q_1} + w_2 \log \E |X(1) |^{q_2}.
\end{align*}
Dividing by $\log t < 0$, $t<1$, and letting $t \to 0$ gives $\tau(q) \geq w_1 \tau(q_1) + w_2 \tau(q_2)$ showing that $\tau$ is concave. But if we divide by $\log t$, $t>1$, and let $t\to \infty$ we get that $\tau(q) \leq w_1 \tau(q_1) + w_2 \tau(q_2)$, hence $\tau$ must be linear.

For $\lambda \in \Lambda$, we have from \eqref{mfdef:prop}
\begin{equation*}
\E|X(\lambda)|^q = \E |M(\lambda, 1)|^q \E|X(1)|^q
\end{equation*}
and so $\E |M(\lambda, 1)|^q = \lambda^{\tau(q)}$ with $\tau$ linear. In particular, the Mellin transform  is $\mathcal{M}_{M(\lambda, 1)}(q) = \lambda^{\tau(q)}$ for every $q \in (0, \overline{q}(X))$. It follows that $M(\lambda, 1)$ is constant a.s. From Proposition \ref{prop:LamSwhendeterministic} we conclude that $X$ is self-similar.
\end{proof}

\begin{remark}\label{rem:cauchyfuneqproof}
The assumption on joint measurability is used only to preclude the existence of pathological solutions of the Cauchy functional equation \eqref{prop:proof:cauchyfun}. Alternatively, one could assume e.g.~that $t\mapsto \E |X(t) |^q$ is continuous at a point or bounded (see e.g.~\cite{aczel1989functional}).
\end{remark}

\begin{remark}
In Proposition \ref{prop:nomfgenonS} we assume $\T$ is not bounded. If $\T$ is bounded, e.g.~$\T=[0,T]$, it is possible to have random scaling over the whole time domain, i.e.~$\mS=\T$. This happens for cascade processes (see Subsection \ref{subsec:examples}).
\end{remark}

Note that the independence of the process and the family of scaling factors in \eqref{mfdef:prop} is crucial for the proof of Proposition \ref{prop:nomfgenonS}. Indeed, one can construct a process with $\T=(0,\infty)$ such that \eqref{mfdef:prop} holds with $\mS=\T$, but the scaling family is not independent of the process (see Section \ref{sec4} and Remark \ref{rem:depconstr}).

\subsection{Summary}

Proposition \ref{prop:nomfgenonS} shows that $\mS$ must not be too large, but we also do not want it to be too small. Based on what we have proved in this section, it is not very restrictive to assume the following.

\begin{assumption}
For any multifractal process, we assume that
	\begin{enumerate}[(i)]
	\item either $\Lambda=(0,1]$ or $\Lambda=[1,\infty)$, unless one of the cases is specified,
	\item $\lambda \mS = \{\lambda t : t \in \mS \} \subset \mS$ for every $\lambda \in \Lambda$.
	\end{enumerate}
\end{assumption}

Moreover, we will implicitly exclude self-similar process from the discussion on multifractal processes.  Note that \textit{(ii)} is satisfied as soon as $\Lambda \subset \mS$. On the other hand, if \textit{(ii)} holds and $1\in \mS$, then $\Lambda \subset \mS$. This assumption leads to two typical classes of multifractal processes:
\begin{itemize}
\item $\Lambda=(0,1]$, $\mS=(0,1]$ or $\mS=(0,T]$, $\mathcal{T}=(0,\infty)$ or $\mathcal{T}=(0,T]$
\item $\Lambda=[1,\infty]$, $\mS=[1,\infty)$ or $\mS=[T,\infty)$, $\mathcal{T}=(0,\infty)$ or $\mathcal{T}=[T,\infty)$.
\end{itemize}
These two classes are closely related, as the following lemma shows by establishing a simple correspondence between them.

\begin{lemma}\label{lemma:Lambdacorrespondance}
	Suppose $X=\{X(t), \, t\in\T\}$ is multifractal with the family of scaling factors $\{M(\lambda,t), \, \lambda \in \Lambda, t\in \mS\}$. Then the process $\overline{X}=\{\overline{X}(t), t \in \overline{\T}\}$ defined by
	\begin{equation*}
	\overline{X}(t)=X(1/t), \quad t \in \overline{\T}=\{1/t : t \in \T\},
	\end{equation*} 
	is multifractal with the family of scaling factors
	\begin{equation*}
	\{\overline{M}(\lambda,t), \, \lambda \in \overline{\Lambda}, t \in \overline{\mS}\}\overset{d}{=}\{M(1/\lambda,1/t), \, \lambda \in \overline{\Lambda}, t \in \overline{\mS}\},
	\end{equation*}
	where $\overline{\mS}=\{1/t : t \in \mS\}$ and  $\overline{\Lambda}=[1,\infty)$ if $\Lambda=(0,1]$ or $\overline{\Lambda}=(0,1]$ if $\Lambda=[1,\infty)$.
\end{lemma}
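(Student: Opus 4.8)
The plan is to verify Definition~\ref{def:mf} directly for $\overline{X}$, translating every ingredient through the substitution $t \mapsto 1/t$. First I would fix the candidate sets: $\overline{\T} = \{1/t : t \in \T\}$, $\overline{\Lambda} = \{1/\lambda : \lambda \in \Lambda\}$ (so $\overline{\Lambda} = [1,\infty)$ when $\Lambda = (0,1]$ and vice versa), and $\overline{\mS} = \{1/t : t \in \mS\}$, which is a subset of $\overline{\T}$ since $\mS \subset \T$. I would also define the candidate family of scaling factors by $\overline{M}(\lambda, t) := M(1/\lambda, 1/t)$ for $\lambda \in \overline{\Lambda}$, $t \in \overline{\mS}$ (more precisely, an independent copy of the field $\{M(1/\lambda,1/t)\}$, independent of $\overline{X}$, which exists because $\{M(1/\lambda,1/t)\}$ is independent of $X$ and $\overline{X}$ is just a reindexing of $X$).

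The core computation is the scaling identity. Fix $\lambda \in \overline{\Lambda}$; then $1/\lambda \in \Lambda$, so \eqref{mfdef:prop} applied to $X$ with scale $1/\lambda$ over $\mS$ gives $\{X((1/\lambda)s)\}_{s \in \mS} \overset{d}{=} \{M(1/\lambda, s) X(s)\}_{s \in \mS}$. Now for $t \in \overline{\mS}$ write $s = 1/t \in \mS$, so that $\overline{X}(\lambda t) = X(1/(\lambda t)) = X((1/\lambda)(1/t)) = X((1/\lambda)s)$ and $\overline{X}(t) = X(1/t) = X(s)$. Reindexing the finite-dimensional equality above along $s \leftrightarrow 1/t$ yields
\begin{equation*}
\{\overline{X}(\lambda t)\}_{t \in \overline{\mS}} \overset{d}{=} \{M(1/\lambda, 1/t)\, \overline{X}(t)\}_{t \in \overline{\mS}} = \{\overline{M}(\lambda, t)\, \overline{X}(t)\}_{t \in \overline{\mS}},
\end{equation*}
which is exactly \eqref{mfdef:prop} for $\overline{X}$. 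The independence requirement transfers because $\{M(1/\lambda, s)\}_{s}$ is independent of $\{X(s)\}_s$, hence of its reindexing $\{\overline{X}(t)\}_t$. Finally, the identical-distribution condition \eqref{Mstatmarg} for $\overline{M}$ is immediate: for fixed $\lambda$ and any $t_1, t_2 \in \overline{\mS}$, $\overline{M}(\lambda, t_1) = M(1/\lambda, 1/t_1) \overset{d}{=} M(1/\lambda, 1/t_2) = \overline{M}(\lambda, t_2)$ since $1/t_1, 1/t_2 \in \mS$ and $M(1/\lambda, \cdot)$ has identically distributed marginals over $\mS$ by \eqref{Mstatmarg} applied to $X$.

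I do not anticipate a genuine obstacle here — this is a bookkeeping argument, and the only point requiring a little care is being precise about what the equality of families of scaling factors in the statement means (equality in distribution as random fields), and ensuring that one takes a version of $\{M(1/\lambda,1/t)\}$ that is independent of $\overline{X}$; this is automatic, but worth a sentence. One should also note that the set-theoretic claims about $\overline{\Lambda}$ (namely that inversion swaps $(0,1]$ and $[1,\infty)$) and about $\overline{\mS} \subset \overline{\T}$ are trivial, so essentially the whole proof reduces to the one reindexing step above.
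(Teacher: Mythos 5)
Your proposal is correct and follows essentially the same route as the paper's proof, which consists of exactly the same reindexing computation $\{\overline{X}(\lambda t)\}_{t\in\overline{\mS}} = \{X(1/(\lambda t))\}_{1/t\in\mS} \overset{d}{=} \{M(1/\lambda,1/t)X(1/t)\}_{1/t\in\mS} = \{\overline{M}(\lambda,t)\overline{X}(t)\}_{t\in\overline{\mS}}$ (the paper labels the remaining bookkeeping ``obvious''). Your extra sentences spelling out the independence transfer and the marginal-distribution condition \eqref{Mstatmarg} are fine and slightly more careful than the paper's one-line proof.
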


\begin{proof}
	This is obvious since
	\begin{align*}
	\left\{\overline{X}(\lambda t) \right\}_{t\in \overline{\mS}} &= \left\{ X\left( 1/(\lambda t) \right) \right\}_{1/t \in \mS}\\
	&\overset{d}{=} \left\{ M\left( 1/\lambda, 1/t \right) X \left(1/t \right) \right\}_{1/t \in \mS}\\ &=\left\{ \overline{M}(\lambda,t) \overline{X}(t) \right\}_{t\in \overline{\mS}}.
	\end{align*}
\end{proof}

We also note that it is possible to extend the sets $\mathcal{S}$ and $\mathcal{T}$ by scaling the time with some fixed scale $T>0$. For example, if $\{X(t), \, t \in (0,1]\}$ is multifractal with $\Lambda=(0,1]$ and $\mathcal{S}=(0,1]$, then the process $\{\widetilde{X}, \, t \in (0,T]\}$ defined by
\begin{equation}\label{e:extendingTandS}
\widetilde{X}(t)=X(t/T), \quad t \in (0,T],
\end{equation}
is multifractal with $\Lambda=(0,1]$ and $\mathcal{S}=(0,T]$.

\section{Properties and examples}\label{sec3}

Our first goal is to derive general properties of the family $M=\{M(\lambda, t), \, \lambda \in \Lambda, t \in \mS\}$, dictated by the relation \eqref{mfdef:prop}. As in the previous section, certain regularity assumptions are needed for the proofs.

\begin{proposition}\label{prop:Mproperties}
If $X=\{X(t), \, t \in \T\}$ is multifractal and $\overline{q} \left(X\right)>0$, then the following holds:
\begin{enumerate}[(i)]
  \item $M(1, t)=1$ a.s.~for every $t \in \mS$.
  \item For every $\lambda_1, \lambda_2 \in \Lambda$ and $t\in \mS$, $M(\lambda_1 \lambda_2,t) \overset{d}{=} M^{(1)} M^{(2)}$ with $M^{(1)}\overset{d}{=} M(\lambda_1, \lambda_2 t)$ and $M^{(2)}\overset{d}{=} M(\lambda_2, t)$ independent.
  \item Let $\lambda \in \Lambda$ and $t\in \mS$. Then for every $n \in \N$ there exist independent identically distributed positive random variables $M^{(1)}, \dots, M^{(n)}$ such that
  \begin{equation*}
    M(\lambda, t) \overset{d}{=} M^{(1)} \cdots M^{(n)}.
  \end{equation*}
  Moreover, $M^{(1)}\overset{d}{=} M(\lambda^{1/n}, t)$.
\end{enumerate}
\end{proposition}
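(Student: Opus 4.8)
The plan is to prove the three statements in order, using the scaling relation \eqref{mfdef:prop} together with the Mellin transform machinery that underlies Proposition \ref{prop:nomfgen}. The key recurring device will be this: for $\lambda\in\Lambda$ and $t,\lambda t\in\mS$ we have $X(\lambda t)\overset{d}{=}M(\lambda,t)X(t)$ with $M(\lambda,t)$ independent of $X(t)$, so taking Mellin transforms gives $\mathcal{M}_{|X(\lambda t)|}(z)=\mathcal{M}_{M(\lambda,t)}(z)\,\mathcal{M}_{|X(t)|}(z)$, and since $\overline{q}(X)>0$ guarantees $\mathcal{M}_{|X(t)|}(z)=\E|X(t)|^z>0$ for real $z\in(0,\overline{q}(X))$, we may divide and isolate $\mathcal{M}_{M(\lambda,t)}(z)$. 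Because a distribution on $(0,\infty)$ is determined by its Mellin transform on such a real interval (see Appendix \ref{appendix:mellin}), identities between Mellin transforms translate into distributional identities for the scaling factors. I should be slightly careful about the degenerate time points: I would first note, exactly as in the proof of Proposition \ref{prop:nomfgen}, that there is some $t_0\in\mS$ with $P(X(t_0)\neq0)>0$, and then that $X(s)\overset{d}{=}M(s/t_0,t_0)X(t_0)$ forces $P(X(t)\neq0)>0$ for the relevant $t$; strictly, under the standing Assumption we have $\lambda\mS\subset\mS$, so all the points $\lambda t$, $\lambda_2 t$, $\lambda_1\lambda_2 t$, $\lambda^{1/n}t$ appearing below do lie in $\mS$ as needed.

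For part (i): applying the scaling relation with $\lambda=1$ gives $\{X(t)\}_{t\in\mS}\overset{d}{=}\{M(1,t)X(t)\}_{t\in\mS}$, hence $\mathcal{M}_{|X(t)|}(z)=\mathcal{M}_{M(1,t)}(z)\mathcal{M}_{|X(t)|}(z)$ for $z\in(0,\overline{q}(X))$ and every $t\in\mS$ with $P(X(t)\neq0)>0$; dividing yields $\mathcal{M}_{M(1,t)}(z)\equiv1$, which is the Mellin transform of the constant $1$, so $M(1,t)=1$ a.s. For the remaining $t\in\mS$ (if any) with $X(t)=0$ a.s., the stationarity \eqref{Mstatmarg} already forces $M(1,t)\overset{d}{=}M(1,t_0)=1$, so $M(1,t)=1$ a.s.\ there too. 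For part (ii): combine the three instances of the scaling relation, $X(\lambda_1\lambda_2 t)\overset{d}{=}M(\lambda_1\lambda_2,t)X(t)$, $X(\lambda_1\lambda_2 t)\overset{d}{=}M(\lambda_1,\lambda_2 t)X(\lambda_2 t)$, and $X(\lambda_2 t)\overset{d}{=}M(\lambda_2,t)X(t)$, each with the factor independent of the process evaluated at the smaller time. Passing to Mellin transforms gives $\mathcal{M}_{M(\lambda_1\lambda_2,t)}(z)\mathcal{M}_{|X(t)|}(z)=\mathcal{M}_{M(\lambda_1,\lambda_2 t)}(z)\mathcal{M}_{M(\lambda_2,t)}(z)\mathcal{M}_{|X(t)|}(z)$; cancelling the positive factor $\mathcal{M}_{|X(t)|}(z)$ yields $\mathcal{M}_{M(\lambda_1\lambda_2,t)}(z)=\mathcal{M}_{M(\lambda_1,\lambda_2 t)}(z)\mathcal{M}_{M(\lambda_2,t)}(z)$ on $(0,\overline{q}(X))$. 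Since the right-hand side is the Mellin transform of the product of independent copies $M^{(1)}\overset{d}{=}M(\lambda_1,\lambda_2 t)$ and $M^{(2)}\overset{d}{=}M(\lambda_2,t)$, and Mellin transforms on a real interval determine the law, we conclude $M(\lambda_1\lambda_2,t)\overset{d}{=}M^{(1)}M^{(2)}$.

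For part (iii): fix $\lambda\in\Lambda$ and $t\in\mS$. Whether $\Lambda=(0,1]$ or $\Lambda=[1,\infty)$ we have $\lambda^{1/n}\in\Lambda$ for every $n$, and $\lambda^{j/n}t\in\mS$ for $j=0,1,\dots,n$ by the standing Assumption. Iterating part (ii) — or directly telescoping the scaling relations $X(\lambda^{j/n}t)\overset{d}{=}M(\lambda^{1/n},\lambda^{(j-1)/n}t)X(\lambda^{(j-1)/n}t)$ for $j=1,\dots,n$ — and passing to Mellin transforms gives $\mathcal{M}_{M(\lambda,t)}(z)=\prod_{j=1}^n\mathcal{M}_{M(\lambda^{1/n},\lambda^{(j-1)/n}t)}(z)$. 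By \eqref{Mstatmarg}, for each fixed scale $\lambda^{1/n}$ the factors $M(\lambda^{1/n},\lambda^{(j-1)/n}t)$, $j=1,\dots,n$, all have the common law of $M(\lambda^{1/n},t)$, so the product on the right is the Mellin transform of $M^{(1)}\cdots M^{(n)}$ for i.i.d.\ copies $M^{(j)}\overset{d}{=}M(\lambda^{1/n},t)$. Uniqueness of the Mellin transform on $(0,\overline{q}(X))$ then gives $M(\lambda,t)\overset{d}{=}M^{(1)}\cdots M^{(n)}$ with $M^{(1)}\overset{d}{=}M(\lambda^{1/n},t)$, as claimed. The only real subtlety — the ``main obstacle'' — is making sure every intermediate time point lies in $\mS$ so that the scaling relation and the stationarity \eqref{Mstatmarg} can legitimately be invoked at each step; this is exactly what condition \textit{(ii)} of the standing Assumption, $\lambda\mS\subset\mS$, was designed to guarantee, so once that is invoked the argument is a routine Mellin-transform bookkeeping.
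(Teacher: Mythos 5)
Your proof is correct and follows essentially the same route as the paper: isolate the Mellin transforms of the scaling factors by cancelling the strictly positive factor $\mathcal{M}_{|X(t)|}(z)$ on a real interval $(0,\overline{q}(X))$, then use uniqueness of the Mellin transform together with \eqref{Mstatmarg} to convert the resulting multiplicative identities into the claimed distributional statements. Your extra care about degenerate time points and about $\lambda\mS\subset\mS$ only makes explicit what the paper handles by reference to the proof of Proposition \ref{prop:nomfgen} and the standing Assumption.
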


\begin{proof}
\textit{(i)} By the same argument as in the proof of Proposition \ref{prop:nomfgen}, for $t\in \mS$ we obtain from $X(t)\overset{d}{=} M(1,t) X(t)$ that for $z \in (0,\overline{q} \left(X\right))$
\begin{equation*}
  \mathcal{M}_{M(1,t)}(z) = 1,
\end{equation*}
which implies the statement.

\textit{(ii)} Since $\lambda_1, \lambda_2 \in \Lambda$, then $\lambda_1 \lambda_2 \in \Lambda$, $\lambda_2 t \in \mS$ and from \eqref{mfdef:prop} we have that for $t\in \mS$
\begin{align*}
  \mathcal{M}_{|X(\lambda_1 \lambda_2 t)|}(z) &= \mathcal{M}_{M(\lambda_1 \lambda_2, t)}(z) \mathcal{M}_{|X(t)|}(z), \\
  \mathcal{M}_{|X(\lambda_1 \lambda_2 t)|}(z) &= \mathcal{M}_{M(\lambda_1, \lambda_2 t)}(z) \mathcal{M}_{|X(\lambda_2 t)|}(z), \\
  \mathcal{M}_{|X(\lambda_2 t)|}(z) &= \mathcal{M}_{M(\lambda_2,t)}(z) \mathcal{M}_{|X(t)|}(z),
\end{align*}
and therefore
\begin{equation*}
  \mathcal{M}_{M(\lambda_1 \lambda_2, t)}(z) \mathcal{M}_{|X(t)|}(z) = \mathcal{M}_{M(\lambda_1, \lambda_2 t)}(z) \mathcal{M}_{M(\lambda_2, t)}(z) \mathcal{M}_{|X(t)|}(z).
\end{equation*}
As in Proposition \ref{prop:nomfgen}, it follows for $z \in (0,\overline{q} \left(X\right))$ that
\begin{equation*}
  \mathcal{M}_{M(\lambda_1 \lambda_2, t)}(z) = \mathcal{M}_{M(\lambda_1, \lambda_2 t)}(z) \mathcal{M}_{M(\lambda_2,t)}(z).
\end{equation*}
Hence, taking $M^{(1)}\overset{d}{=} M(\lambda_1, \lambda_2 t)$ and $M^{(2)}\overset{d}{=} M(\lambda_2, t)$ independent completes the proof.

\textit{(iii)} Similarly as in \textit{(ii)}, $\lambda \in \Lambda$ implies $\lambda^{1/n} \in \Lambda$ and from \eqref{mfdef:prop} it follows for $i=0,\dots,n-1$ that
\begin{equation*}
  \mathcal{M}_{\left| X\left( \lambda^{(n-i)/n} t\right) \right|}(z) = \mathcal{M}_{M\left(\lambda^{1/n},  \lambda^{(n-i-1)/n} t \right)}(z) \mathcal{M}_{\left| X\left( \lambda^{(n-i-1)/n} t\right) \right|}(z).
\end{equation*}
A successive application and \eqref{Mstatmarg} yield
\begin{equation*}
  \mathcal{M}_{M(\lambda, t)}(z) \mathcal{M}_{|X(t)|}(z) = \left( \mathcal{M}_{M(\lambda^{1/n}, t)}(z) \right)^n \mathcal{M}_{|X(t)|}(z)
\end{equation*}
and by the same argument as in \textit{(ii)} taking $M^{(1)}, \dots, M^{(n)}$ independent and distributed as $M(\lambda^{1/n},t)$ gives $M(\lambda,t) \overset{d}{=} M^{(1)} \cdots M^{(n)}$.
\end{proof}

\begin{remark}
The property \textit{(iii)} in Proposition \ref{prop:Mproperties} appears across the literature under various names. In \cite{veneziano1999basic} it is referred to as log-infinite divisibility (see also \cite{bacry2003log}). In different context, the authors of \cite{hirsch2013mellin} obtain a random variable possessing the same property which is called multiplicative infinite divisibility there. Zolotarev \cite[Section 3.5]{zolotarev1986one} refers to the same property as $M$-infinite divisibility. Clearly, Proposition \ref{prop:Mproperties}\textit{(iii)} implies that for every $\lambda \in \Lambda$ and $t\in \mS$, $\log M(\lambda,t)$ is infinitely divisible.
\end{remark}

The infinite divisibility of $\log M(\lambda,t)$ suggest an intimate relation with L\'evy processes. Recall that a L\'evy process is a process starting at zero with stationary independent increments and stochastically continuous. Recall that the stochastic continuity of some process $\{Y(t)\}$ means that for every $t_0$, $Y(t) \to^P Y(t_0)$ as $t \to t_0$. If the same holds with convergence in distribution, then we say that the process is continuous in law.

\begin{proposition}\label{prop:MtoL}
Suppose $X=\{X(t), \, t \in \T\}$ is multifractal, $\overline{q} \left(X\right)>0$ and for some $t\in \mS$ (and hence for every $t\in\mS$), $\{M(\lambda,t), \, \lambda \in \Lambda\}$ is continuous in law. Let $\{L(s), \, s \geq 0\}$ be a L\'evy process such that
\begin{equation*}
  L(1) \overset{d}{=} \begin{cases}
  \log M(e^{-1}, t),& \text{ if } \Lambda=(0,1], \\
  \log M(e, t),& \text{ if } \Lambda=[1,\infty).
  \end{cases}
\end{equation*}
Then for every $s\geq 0$
\begin{equation}\label{MtoL01}
L(s) \overset{d}{=} \log M(e^{-s}, t)
\end{equation}
if $\Lambda=(0,1]$, and if $\Lambda=[1,\infty)$
\begin{equation}\label{MtoL1infty}
L(s) \overset{d}{=} \log M(e^s, t).
\end{equation}
\end{proposition}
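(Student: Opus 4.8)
The plan is to treat the case $\Lambda=(0,1]$ in detail; the case $\Lambda=[1,\infty)$ is handled identically with $e^{-s}$ replaced by $e^{s}$, or deduced from Lemma~\ref{lemma:Lambdacorrespondance}. Throughout, fix $t\in\mS$; by \eqref{Mstatmarg} neither the hypothesis nor the conclusion depends on this choice. The idea is to introduce the one-parameter family $Y(s):=\log M(e^{-s},t)$, $s\geq0$, to show that its collection of one-dimensional laws $\mu_s:=\mathrm{Law}\big(Y(s)\big)$ is a weakly continuous convolution semigroup on $\R$ with $\mu_0=\delta_0$, and then to invoke the classical correspondence between such semigroups and L\'evy processes (see e.g.\ any standard reference on L\'evy processes).

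The three required properties all come directly from Proposition~\ref{prop:Mproperties}. First, $Y(0)=\log M(1,t)=0$ a.s.\ by part \textit{(i)}, so $\mu_0=\delta_0$. Second, for $s,u\geq0$ we have $e^{-s},e^{-u},e^{-(s+u)}\in\Lambda$, so part \textit{(ii)} combined with \eqref{Mstatmarg} (which gives $M(e^{-s},e^{-u}t)\overset{d}{=}M(e^{-s},t)$) yields
\[
  M\big(e^{-(s+u)},t\big)\overset{d}{=}M^{(1)}M^{(2)},\qquad M^{(1)}\overset{d}{=}M(e^{-s},t),\quad M^{(2)}\overset{d}{=}M(e^{-u},t),\ \text{independent};
\]
taking logarithms gives $\mu_{s+u}=\mu_s*\mu_u$. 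Third, since $\lambda\mapsto M(\lambda,t)$ is continuous in law and $s\mapsto e^{-s}$ and $\log$ are continuous, the continuous mapping theorem shows $s\mapsto Y(s)$ is continuous in law, i.e.\ $s\mapsto\mu_s$ is weakly continuous; Proposition~\ref{prop:Mproperties}\textit{(iii)} additionally shows each $\mu_s$ is infinitely divisible.

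It then remains to observe that a weakly continuous convolution semigroup $(\mu_s)_{s\geq0}$ with $\mu_0=\delta_0$ is the family of one-dimensional distributions of some L\'evy process and is determined by any single member $\mu_{s_0}$, $s_0>0$. The reason is that, for fixed $\xi\in\R$, the map $g\colon s\mapsto\widehat{\mu_s}(\xi)$ is continuous, satisfies $g(s+u)=g(s)g(u)$ and $g(0)=1$, hence is nowhere zero, so $g(s)=e^{s\psi(\xi)}$ where $\psi(\xi)$ is the continuous branch of $\log\widehat{\mu_1}(\xi)$ determined by $\psi(0)=0$; thus $\widehat{\mu_s}$, and so $\mu_s$, is recovered from $\mu_1$ for every $s\geq0$. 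Since the one-dimensional laws of the given L\'evy process $L$ also form a weakly continuous convolution semigroup from $\delta_0$ with $\mathrm{Law}(L(1))=\mathrm{Law}\big(\log M(e^{-1},t)\big)=\mu_1$, we conclude $\mathrm{Law}(L(s))=\mu_s=\mathrm{Law}\big(\log M(e^{-s},t)\big)$ for every $s\geq0$, which is \eqref{MtoL01}; \eqref{MtoL1infty} follows in exactly the same way.

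I expect the only genuine obstacle to be conceptual rather than computational: Definition~\ref{def:mf} supplies no actual process with independent increments on the $M$-side, only equalities of one-dimensional distributions, so the whole argument has to be carried out at the level of the semigroup $(\mu_s)$. Care is then needed to see that this semigroup is pinned down by $\mu_1$ alone --- precisely the branch-of-logarithm step above, where it is convenient that $\widehat{\mu_1}$ is non-vanishing, a fact guaranteed by the infinite divisibility in Proposition~\ref{prop:Mproperties}\textit{(iii)} (and, as noted, also directly by the semigroup property together with continuity at $s=0$).
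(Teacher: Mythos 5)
Your proof is correct, and it takes a route that is related to but genuinely distinct from the paper's. The paper works only with Proposition \ref{prop:Mproperties}\textit{(iii)}: it notes $\mu:=\mathrm{Law}(L(1))$ is infinitely divisible, invokes the uniqueness of $n$-th convolution roots of infinitely divisible laws to get $\mu_{1/n}=\mu^{1/n}$, pieces these together to cover all rational $s$, and only then uses the continuity-in-law hypothesis to pass to irrational $s$. You instead use Proposition \ref{prop:Mproperties}\textit{(ii)} together with \eqref{Mstatmarg} (and, implicitly, the standing assumption $\lambda\mS\subset\mS$, which you should cite when claiming $M(e^{-s},e^{-u}t)\overset{d}{=}M(e^{-s},t)$) to obtain the full convolution identity $\mu_{s+u}=\mu_s*\mu_u$ for all real $s,u\geq 0$ at once, and then identify the semigroup from $\mu_1$ via the distinguished logarithm of the non-vanishing characteristic function. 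What your approach buys is self-containedness at the determination step --- the Cauchy functional equation argument replaces the appeal to uniqueness of convolution roots from \cite[Chapter 7]{sato1999levy} --- and it makes the semigroup structure explicit rather than only its rational skeleton; what the paper's approach buys is that it leans on a single, already-proved divisibility property \textit{(iii)} and a textbook citation, deferring all use of continuity to one final limiting step. Both arguments use the continuity-in-law hypothesis in an essential way (the paper for irrational $s$, you to solve $g(s+u)=g(s)g(u)$ continuously), and both correctly observe that only one-dimensional marginals are being matched, so no claim about $\{\log M(e^{-s},t)\}_s$ being a L\'evy process is needed or made.
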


\begin{proof}
Let $\mu$ denote the distribution of $L(1)$ which is infinitely divisible by Proposition \ref{prop:Mproperties}\textit{(iii)}. For every $n\in \N$ there is a unique probability measure $\mu^{1/n}$ such that the $n$-fold convolution denoted by $\left(\mu^{1/n}\right)^n$ is $\mu$. Moreover, $\mu^s$ is well-defined for every $s\geq 0$ and is equal to the distribution of $L(s)$ (see \cite[Chapter 7]{sato1999levy}). For $s\geq 0$, let $\mu_s$ denote the distribution of $\log M(e^{-s},t)$ if $\Lambda=(0,1]$ or the distribution of $\log M(e^s,t)$ if $\Lambda=[1,\infty)$. Clearly, $\mu_1=\mu$ and by Proposition \ref{prop:Mproperties}\textit{(iii)} for every $n\in \N$, $\mu_1=\left(\mu_{1/n}\right)^n$ and so $\mu_{1/n}=\mu^{1/n}$. Again by Proposition \ref{prop:Mproperties}\textit{(iii)} $\mu_{m/n}=\left(\mu_{1/n} \right)^m = \mu^{m/n}$ for every $m\in \N$. This proves the statement for every rational $s$.
If $s$ is irrational, take $(s_n)$ to be a sequence of rational numbers such that $s_n\to s$. By the continuity in law $\mu_{s_n} \to^d \mu_s$ and so $\mu_s=\mu^s$.
\end{proof}

\begin{remark}
One can avoid making assumptions on the family $\{M(\lambda,t), \, \lambda\in \Lambda\}$ by using conditions on the original process. Suppose $X$ is continuous in law. For some $t\in \mS$ we have $P(X(t)\neq 0)>0$ and for the sequence $(\lambda_n)$ in $\Lambda$ such that $\lambda_n \to \lambda \in \Lambda$ we have $X(\lambda_n t) \to^d X(\lambda t)$ and so $M(\lambda_n,t) X(t) \to^d M(\lambda, t) X(t)$. From here we obtain for $\theta \in \R$
\begin{equation*}
  \E \left[ \mathbf{1}_{\{|X(t)|>0\}} e^{  i \theta \left(\log M(\lambda_n, t) + \log |X(t)| \right) } \right] \to \E \left[ \mathbf{1}_{\{|X(t)|>0\}} e^{ i \theta \left(\log M(\lambda, t) + \log |X(t)|\right)} \right].
\end{equation*}
By independence it follows that
\begin{align*}
  &\E \left[ e^{ i \theta \log M(\lambda_n, t) } \right]  \E \left[ \mathbf{1}_{\{|X(t)|>0\}} e^{ i \theta \log |X(t)| } \right]\\
  &\hspace{4cm} \to \E \left[ e^{ i \theta \log M(\lambda, t) } \right]  \E \left[ \mathbf{1}_{\{|X(t)|>0\}} e^{ i \theta \log |X(t)| } \right].
\end{align*}
Assuming additionally that the characteristic function of $\log |X(t)|$ has only isolated zeros, we can conclude that
\begin{equation*}
  \E \left[ e^{ i \theta \log M(\lambda_n, t) } \right] \to \E \left[ e^{ i \theta \log M(\lambda, t) } \right],
\end{equation*}
yielding continuity in law of $\{\log M(\lambda,t), \, \lambda \in \Lambda\}$. The assumption on the moments in Propositions \ref{prop:nomfgen}, \ref{prop:nomfgenonS} and \ref{prop:Mproperties} can be replaced with the condition that the characteristic function of $\log |X(t)|$ has only isolated zeros.
\end{remark}

\begin{remark}\label{remark:afterprops}
Notice that in the proofs of Propositions \ref{prop:LamSwhendeterministic}-\ref{prop:MtoL} only one-dimensional distributions of the multifractal process $X$ are used. Therefore it is enough to assume $X$ is marginally multifractal and hence it applies to processes satisfying only \eqref{mfdef:motivating}. 
\end{remark}

We note that \eqref{MtoL01} and \eqref{MtoL1infty} only show that the one-dimensional marginal distributions of two processes are equal. However, $\{\log M(e^{-s}, t), \, s \geq 0\}$  (or $\{\log M(e^{s}, t), \, s \geq 0\}$) need not be a L\'evy process. For Brownian motion such examples are known under the name fake Brownian motion (see \cite{oleszkiewicz2008fake} and references therein). Actually, we will show in Subsection \ref{subsec:examples} that the well-known example of multifractal process, multiplicative cascades, provide a family of scaling factors that does not arise from some L\'evy process.

Proposition \ref{prop:MtoL} shows that the marginal distributions of the family $\{M(\lambda,t)\}$ are completely determined by $M(e^{-1},t)$ (or $M(e,t)$). It also provides an approach for constructing a family of scaling factors with properties as in Proposition \ref{prop:Mproperties}. Indeed, for fixed $t> 0$ one could take $M(\lambda, t)=e^{ L(-\log \lambda)}$ for $\Lambda=(0,1]$ or $M(\lambda,t)=e^{ L(\log \lambda)}$ for $\Lambda=[1,\infty)$ with $\{L(s), \, s \geq 0\}$ being some L\'evy process. This idea will be further developed in Section \ref{sec4} where it is used to define a new class of multifractal processes.

\subsection{Scaling of moments}
The scaling of moments in the sense of relation \eqref{momscal:motivating} is a direct consequence of \eqref{mfdef:prop}. Indeed, suppose $X=\{X(t), \, t \in \T\}$ is multifractal and the assumptions of Proposition \ref{prop:MtoL} hold. Assume that $\Lambda=(0,1]$, the argument is similar in the other case. There exists a L\'evy process $\{L(s)\}$ such that $M(\lambda,t) =^d e^{L(-\log \lambda)}$. Let $\Psi$ denote the characteristic exponent of $L$, that is $\Psi(\theta) = \log \E \left[ e^{i\theta L(1)} \right]$. If we assume that $1\in \mS$, then since for $t\in \Lambda$, $X(t) \overset{d}{=} M(t,1) X(1)$, it follows that for $q \in [0, \overline{q}(X))$
\begin{equation*}
\E \left[ M(t,1)^q \right] = \E \left[ e^{q L(-\log t)} \right] < \infty.
\end{equation*}
Hence the moment generating function of $L(s)$ exists on $[0, \overline{q}(X))$ for every $s\geq 0$. Moreover, for $q \in [0, \overline{q}(X))$
\begin{equation*}
  \E \left[ e^{q L(s)} \right] = e^{s \psi(q)}
\end{equation*}
and by naturally extending $\Psi$ we have $\psi(q) = \Psi(-iq)$. We will refer to $\psi$ as the Laplace exponent. The same argument applies to moments of negative order, that is for $q \in (\underline{q}(X),0]$ where $\underline{q}(X)$ is defined in \eqref{qunderline}. This way we have proved:

\begin{proposition}\label{prop:momscalgeneral}
Under the assumptions of Proposition \ref{prop:MtoL}, if $1\in \mS$, then for every $q \in (\underline{q}(X), \overline{q}(X))$
\begin{itemize}
\item if $\Lambda=(0,1]$
\begin{equation*}
  \E |X(t)|^q = t^{-\psi(q)} \E|X(1)|^q, \quad t \in (0,1],
\end{equation*}
\item if $\Lambda=[1,\infty)$
\begin{equation*}
  \E |X(t)|^q = t^{\psi(q)} \E|X(1)|^q, \quad t \in [1,\infty),
\end{equation*}
\end{itemize}
where $\psi$ is the Laplace exponent of the L\'evy process $L$ defined in Proposition \ref{prop:MtoL}.
\end{proposition}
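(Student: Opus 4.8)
The plan is to leverage Proposition \ref{prop:MtoL}, which identifies the one-dimensional marginals of the scaling family with those of a L\'evy process $L$, together with the elementary factorization $X(t)\overset{d}{=}M(t,1)X(1)$ valid for $t\in\Lambda$ once $1\in\mS$ (this uses Assumption (ii), or just that $1\in\mS$ forces $\Lambda\subset\mS$, so $t\cdot 1=t\in\mS$ whenever $t\in\Lambda$). Treat the case $\Lambda=(0,1]$ in detail; the case $\Lambda=[1,\infty)$ is symmetric (and can alternatively be deduced via Lemma \ref{lemma:Lambdacorrespondance}).

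First I would fix $q\in(\underline q(X),\overline q(X))$; the point of this range is precisely that $\E|X(t)|^q<\infty$ for all $t\in\T$, so all moments written below are finite. For $t\in(0,1]$ we have $t\in\Lambda$, hence by \eqref{mfdef:prop} applied at the single time point $1$ and the independence of $M(t,1)$ from $X$,
\begin{equation*}
\E|X(t)|^q=\E\big[|M(t,1)X(1)|^q\big]=\E\big[M(t,1)^q\big]\,\E|X(1)|^q .
\end{equation*}
By Proposition \ref{prop:MtoL}, $\log M(t,1)\overset{d}{=}L(-\log t)$ where $\{L(s),\,s\ge 0\}$ is the L\'evy process with $L(1)\overset{d}{=}\log M(e^{-1},1)$, so $\E[M(t,1)^q]=\E[e^{qL(-\log t)}]$. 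Next I would record that this moment generating function is finite on the relevant $q$-range for every $s\ge 0$: for $q\in[0,\overline q(X))$ pick $t=e^{-s}\in(0,1]$ to get $\E[e^{qL(s)}]=\E|X(e^{-s})|^q/\E|X(1)|^q<\infty$, and symmetrically for $q\in(\underline q(X),0]$. Hence by the standard theory of L\'evy processes (e.g.~\cite{sato1999levy}) the function $s\mapsto\E[e^{qL(s)}]$ is multiplicative in $s$, i.e.~$\E[e^{qL(s)}]=e^{s\psi(q)}$ with $\psi(q)=\Psi(-iq)$ the Laplace exponent of $L$, finite for $q\in(\underline q(X),\overline q(X))$.

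Combining the two displays with $s=-\log t$ gives $\E|X(t)|^q=e^{-\log t\,\psi(q)}\E|X(1)|^q=t^{-\psi(q)}\E|X(1)|^q$ for $t\in(0,1]$, which is the claimed formula. For $\Lambda=[1,\infty)$ the same computation with $M(t,1)\overset{d}{=}e^{L(\log t)}$ (Proposition \ref{prop:MtoL}, \eqref{MtoL1infty}) yields $\E|X(t)|^q=t^{\psi(q)}\E|X(1)|^q$ for $t\in[1,\infty)$.

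I do not expect a serious obstacle here — the statement is essentially a corollary of Proposition \ref{prop:MtoL}. The only point requiring a little care is the justification that the moment generating function $s\mapsto\E[e^{qL(s)}]$ is genuinely finite and hence exactly exponential in $s$ on the open interval $(\underline q(X),\overline q(X))$: finiteness at the integer time $s=1$ does not by itself give finiteness at all $s\ge 0$ and multiplicativity, so one must feed in the finiteness $\E|X(e^{-s})|^q<\infty$ coming from the hypothesis $\overline q(X)>\Re$-range (and its negative-order analogue) before invoking the L\'evy--Khintchine representation to get $\E[e^{qL(s)}]=e^{s\psi(q)}$. Once that is in place, the rest is a one-line substitution.
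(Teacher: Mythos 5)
Your argument is correct and follows essentially the same route as the paper: factor $\E|X(t)|^q=\E[M(t,1)^q]\,\E|X(1)|^q$ using $1\in\mS$ and independence, identify $M(t,1)$ with $e^{L(-\log t)}$ via Proposition \ref{prop:MtoL}, deduce finiteness of the moment generating function of $L(s)$ for all $s\ge 0$ from the finiteness of $\E|X(e^{-s})|^q$, and then invoke $\E[e^{qL(s)}]=e^{s\psi(q)}$. The extra care you take about why the exponential formula holds on the whole range $(\underline q(X),\overline q(X))$ is exactly the point the paper also addresses, so there is nothing to add.
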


We conclude that the role of the scaling function $\tau$ in \eqref{momscal:motivating} is taken by the Laplace exponent $\psi$ or $-\psi$. Since $\psi$ is the cumulant generating function of $L(1)$, it is well known that $\psi$ is convex and strictly convex if and only if $L(1)$ is non-degenerate. The strict concavity of the scaling function is a typical property characterizing multifractals that satisfy \eqref{momscal:motivating} (see e.g.~\cite{mandelbrot1997mmar}). This corresponds to our case $\Lambda=(0,1]$ when we have $\tau(q)=-\psi(q)$ which is strictly concave if $L(1)$ is non-degenerate.

\begin{remark}\label{rem:noscalingofmom}
The scaling of moments as in \eqref{momscal:motivating} cannot hold for every $t>0$ with $\tau$ being nonlinear. Indeed, this follows as in the proof of Proposition \ref{prop:nomfgenonS} (see also \cite{mandelbrot1997mmar}).
\end{remark}

Without involving moments, the scaling property may be expressed in terms of the Mellin transforms. Assuming $1\in \mS$, it follows from \eqref{mfdef:prop} that for every $\theta \in \R$ we have
\begin{itemize}
\item if $\Lambda=(0,1]$
\begin{equation*}
  \mathcal{M}_{|X(t)|}(\theta i) = t^{-\Psi(\theta)} \mathcal{M}_{|X(1)|}(\theta i), \quad t \in (0,1],
\end{equation*}
\item if $\Lambda=[1,\infty)$
\begin{equation*}
  \mathcal{M}_{|X(t)|}(\theta i) = t^{\Psi(\theta)} \mathcal{M}_{|X(1)|}(\theta i), \quad t \in [1,\infty).
\end{equation*}
\end{itemize}

\subsection{Examples}\label{subsec:examples}
A prominent example of a truly multifractal process satisfying scale invariance in the sense of \eqref{mfdef:motivatingfdd} are multiplicative cascades. The cascades have been introduced by Mandelbrot \cite{mandelbrot1972} using a discrete grid-based construction. Several equivalent constructions have been proposed to obtain continuous scaling properties starting with \cite{barral2002multifractal} and followed by \cite{bacry2003log}, \cite{muzy2002multifractal} and, more recently, \cite{barral2014exact}.

Let $\nu$ be an arbitrary infinitely divisible distribution and $\Psi$ its characteristic exponent, $\Psi(\theta) = \log \E \left[ e^{i\theta \nu} \right]$. Assume that $\theta_c= \sup \{ \theta \geq 0 : \E \left[ e^{\theta \nu} \right] < \infty \} > 1$ so that the Laplace exponent $\psi (\theta) = \log \E \left[ e^{\theta \nu} \right]$ is finite on $[0, \theta_c)$. Furthermore, assume that $\psi(1)=0$ so that $\E \left[ e^{\nu} \right]= 1$. Next, let $\mathcal{L}$ be an independently scattered infinitely divisible random measure on the half-plane $\mH = \{ (u, v) : u \in \R, v \geq 0\}$ associated to $\nu$ with control measure $\mu(du,dv)=v^2 du dv$ (see \cite{rajput1989spectral} for details). In particular, for every Borel set $A \subset \mH$ such that $\mu(A) < \infty$
\begin{equation}\label{exa:cascade:lambda}
  \E \exp \left\{i \theta \mathcal{L} (A) \right\} = e^{\Psi(\theta) \mu(A)}.
\end{equation}
Fix $T > 0$ and for $t \in \R$ and $l > 0$ define sets (cones)
\begin{equation*}
 A_l(t) = \{(u,v) : v \geq l, \ -f(v)/2 < u-t \leq  f(v)/2 \},
\end{equation*}
where
\begin{equation*}
f(v)= \begin{cases}
v, \ v\leq T,\\
T, \ v> T.
\end{cases}
\end{equation*}
Now we can define stochastic process
\begin{equation*}
 \omega_l(t) = \mathcal{L} \left( A_l(t) \right), \quad t \in \R,
\end{equation*}
and for $l > 0$ a random measure on $\R$ by
\begin{equation*}
  Q_l(dt) = e^{\omega_l(t)} dt.
\end{equation*}
One can show that a.s.~$Q_l$ converges weakly to a random measure $Q$, as $l\to 0$ (see \cite{barral2014exact} for details). This limiting measure $Q$ is called the log-infinitely divisible cascade and the cascade process $\{X(t),\, t \in [0, \infty)\}$ is obtained by putting $X(t) = Q([0, t])$. For $\lambda \in (0, 1]$ and $l \in (0, T]$ the process $\{\omega_l(t)\}$ satisfies the following property
\begin{equation*}
  \{ \omega_{\lambda l}(\lambda t) \}_{t\in [0,T]} \overset{d}{=} \{ \Omega(\lambda) + \omega_l(t)\}_{t\in [0,T]},
\end{equation*}
where $\Omega(\lambda)$ is independent of $\{ \omega_l(t) \}$, it does not depend on $t$ and $l$ and has the characteristic function $\E \left[ e^{i \theta \Omega(\lambda)} \right] = \lambda^{-\Psi(\theta)}$. From here, one easily obtains that for $\lambda \in (0, 1]$
\begin{equation}\label{cascadesscaling}
  \{ X(\lambda t)\}_{t \in [0,T]} \overset{d}{=} \{ W(\lambda) X(t) \}_{t \in [0,T ]},
\end{equation}
with $W(\lambda) = \lambda e^{\Omega(\lambda)}$, independent of $\{ X(t)\}$. This implies that $\{X(t)\}$ is multifractal by Definition \ref{def:mf} with $\T=[0,\infty)$, $\mS=[0,T]$, $\Lambda = (0, 1]$ and the family of scaling factors $M(\lambda, t) = W(\lambda)$ not depending on $t$.

Let $\{Z(s), \, s \geq 0\}$ be a process defined by $Z(s)=\log W(e^{-s})$. Then by Proposition \ref{prop:MtoL}, the one-dimensional distributions of $Z$ correspond to those of some L\'evy process $L$. Here we can actually compute that
\begin{equation*}
  \E \exp \left\{ i\theta Z(s) \right\} = \E \exp \left\{ i\theta \log \left( e^{-s} e^{\Omega(e^{-s})} \right) \right\} = \exp \left\{ s \Psi(\theta) - i \theta s \right\},
\end{equation*}
and hence $L$ can be identified with the process $\{\widetilde{L}(s)-s\}$ where $\{\widetilde{L}(s)\}$ is L\'evy process with characteristic exponent $\Psi$. However, we will now show that $Z$ is not a L\'evy process. It is sufficient to show for arbitrary $0<s_1<s_2$ that
\begin{equation}\label{exa:cascade:ZLdiff}
  \left( Z(s_1)+s_1, Z(s_2)+s_2 \right) \overset{d}{\neq} \left( \widetilde{L}(s_1), \widetilde{L}(s_2) \right).
\end{equation}
Since $\left( Z(s_1)+s_1, Z(s_2)+s_2 \right) \overset{d}{=} \left( \Omega(e^{-s_1}), \Omega(e^{-s_2}) \right)$, we put $\lambda_1=e^{-s_1}$, $\lambda_2=e^{-s_2}$ and for $t, l \in (0,T]$ we consider the characteristic function of $\left( \omega_{\lambda_1 l} (\lambda_1 t), \omega_{\lambda_2 l} (\lambda_2 t)\right)$:
\begin{equation*}
  \E \exp \left\{ i \left(a_1 \omega_{\lambda_1 l}(\lambda_1 t) + a_2 \omega_{\lambda_2 l}(\lambda_2 t) \right) \right\} = \E \exp \left\{ i \left(a_1 \mathcal{L} (A_{\lambda_1 l}(\lambda_1 t)) + a_2 \mathcal{L} (A_{\lambda_2 l}(\lambda_2 t)) \right) \right\}.
\end{equation*}
Now let
\begin{align*}
  B_1 &= A_{\lambda_1 l}(\lambda_1 t) \backslash A_{\lambda_2 l}(\lambda_2 t),\\
  B_2 &= A_{\lambda_1 l}(\lambda_1 t) \cap A_{\lambda_2 l}(\lambda_2 t),\\
  B_3 &= A_{\lambda_2 l}(\lambda_2 t) \backslash A_{\lambda_1 l}(\lambda_1 t),
\end{align*}
and since these sets are disjoint we have by independence and \eqref{exa:cascade:lambda} that
\begin{align*}
  &\E \exp \left\{ i \left(a_1 \omega_{\lambda_1 l}(\lambda_1 t) + a_2 \omega_{\lambda_2 l}(\lambda_2 t) \right) \right\}\\
  &\hspace{3cm}= \E \exp \left\{ i \left(a_1 \mathcal{L} (B_1) + (a_1+a_2) \mathcal{L} (B_2) + \mathcal{L}(B_3) \right) \right\}\\
  &\hspace{3cm}= \exp \left\{ \Psi(a_1) \mu(B_1) + \Psi(a_1+a_2) \mu(B_2) + \Psi(a_2) \mu(B_3) \right\}.
\end{align*}
The cascades are obtained in the limit when $l\to 0$, so we may assume $l \leq t$. A direct computation shows that
\begin{align*}
  \mu(B_1) &= \log \frac{\lambda_1-\lambda_2}{\lambda_1} + \log \frac{t}{l} + 1,\\
  \mu(B_2) &= \log \frac{1}{\lambda_1-\lambda_2} + \log \frac{T}{t},\\
  \mu(B_3) &= \log \frac{\lambda_1-\lambda_2}{\lambda_2} + \log \frac{t}{l} + 1,
\end{align*}
and hence
\begin{align*}
  &\E \exp \left\{ i \left(a_1 \omega_{\lambda_1 l}(\lambda_1 t) + a_2 \omega_{\lambda_2 l}(\lambda_2 t) \right)  \right\}\\
  &= \exp \left\{ \Psi(a_1) \log \lambda_1^{-1} + \psi(a_2) \log \lambda_2^{-1} \log (\lambda_1-\lambda_2) \left(\Psi(a_1) + \Psi(a_2) - \Psi(a_1+a_2)\right) \right\}\\
  &\quad  \times \exp \left\{\Psi(a_1+a_2) \left(1+\log \frac{T}{l} \right) + \left(1+ \log \frac{t}{l} \right) \left(\Psi(a_1) + \Psi(a_2) - \Psi(a_1+a_2)\right) \right\}.
\end{align*}
Now we can write
\begin{equation*}
  \left( \omega_{\lambda_1 l} (\lambda_1 t), \omega_{\lambda_2 l} (\lambda_2 t)\right) \overset{d}{=} \left( \Omega(\lambda_1), \Omega(\lambda_2) \right) + \left(\omega_l'(t), \omega_l''(t) \right),
\end{equation*}
where the random vectors on the right are independent and $\omega_l'(t) =^d \omega_l''(t) =^d \omega_l(t)$. This implies that the characteristic function of $\left( Z(s_1)+s_1, Z(s_2)+s_2 \right)$ is
\begin{align*}
    \E &\exp \left\{ i \left(a_1 (Z(s_1)+s_1) + a_2 (Z(s_2)+s_2) \right) \right\}\\
    &= \exp \left\{ \Psi(a_1) s_1 + \Psi(a_2) s_2 + \log (e^{-s_1}-e^{- s_2}) \left(\Psi(a_1) + \Psi(a_2) - \Psi(a_1+a_2)\right) \right\}.
\end{align*}
On the other hand, $\{\widetilde{L}(s)\}$ is a L\'evy process so that
\begin{align*}
    \E \exp \left\{ i \left(a_1 \widetilde{L}(s_1) + a_2 \widetilde{L}(s_2) \right) \right\} &= \E \exp \left\{ i \left( (a_1+a_2) \widetilde{L}(s_1) + a_2 \widetilde{L}(s_2) - a_2 \widetilde{L}(s_1)\right) \right\}\\
    &= \exp \left\{ \Psi(a_2) (s_2-s_1) + \Psi(a_1+a_2) s_1 \right\},
\end{align*}
which proves \eqref{exa:cascade:ZLdiff}.

\begin{remark}
This fact is of some independent interest as it provides an example of a process whose one-dimensional marginal distributions are the same as those of some L\'evy process but the process itself is not a L\'evy process. This problem has been considered in the martingale setting for Brownian motion and generally for self-similar processes (see \cite{fan2015mimicking}, \cite{oleszkiewicz2008fake} and the references therein). The example obtained here will be elaborated in more details elsewhere.
\end{remark}

Further examples of multifractal processes can be obtained by compounding the cascade process and some self-similar process. For Brownian motion this gives the so-called multifractal random walk (see \cite{bacry2003log}). These models have gained considerable interest in mathematical finance since they can replicate most of the stylized facts of financial time series. The construction of the cascade process necessarily involves the so-called integral scale $T$ which is hard to interpret in finance. Several extensions have been proposed by letting $T\to\infty$, however, these do not satisfy exact scale  invariance as in \eqref{mfdef:motivating} (see e.g.~\cite{duchon2012forecasting}, \cite{muzy2013random}). Note that this is in accordance with Proposition \ref{prop:nomfgenonS}.

As mentioned in the introduction, in \cite{veneziano1999basic} the following variant of \eqref{mfdef:motivatingfdd} is investigated: for every $\lambda \in \Lambda$, $\{X(t)\} =^d \{M_\lambda X(\lambda t) \}$ with $M_\lambda$ independent of $\{X(t)\}$. The canonical example provided there are the so-called processes with independent stationary increment ratios. When $\Lambda=(0,1]$, these in essence correspond to processes defined by $X(t)=e^{L(\log \left(t/T\right))}$ for $t \in [T,\infty)$, where $L$ is a L\'evy process. For $\lambda \in (0,1]$ we have
\begin{align*}
  X(t) &= e^{L(\log \left(t/T\right)) - L(\log \left(t/T\right) + \log \lambda) + L(\log \left(t/T\right)+\log \lambda)}\\
  &= e^{L(\log \left(t/T\right)) - L(\log \left(t/T\right) + \log \lambda)} X(\lambda t),
\end{align*}
where the two random variables on the right are independent by the independence of increments of $L$. This means that $X(t)=^d M_\lambda X(\lambda t)$ with 
$$M_\lambda=e^{L(\log \left(t/T\right)) - L(\log \left(t/T\right) + \log \lambda)}.$$
Although the distribution of $M_\lambda$ does not depend on $t$ by the stationarity of increments of $L$, it is not the same random variable for every $t$ and the definition does not hold in the sense of equality od finite dimensional distributions. It does however satisfy our Definition \ref{def:mf}, but it cannot be extended to the interval $\T=(0,\infty)$. In Section \ref{sec4} we will present an approach that solves this problem.

We mention the last example of a process we are aware that has exact scale invariance property. The multifractional Brownian motion with random exponent is a process $\{Y(t)\}$ obtained by replacing the Hurst parameter of fractional Brownian motion with some stochastic process $\{S(t),\, t\in \R\}$ (see \cite{ayache2005multifractional} for details). Given a stationary process $\{S(t)\}$ it has been showed in \cite[Theorem 4.1]{ayache2005multifractional} using a wavelet decomposition of the process $\{Y(t), \, t\in \R\}$ that for any $\lambda>0$ it holds that
\begin{equation*}
  Y(\lambda t) \overset{d}{=} \lambda^{S(t)} Y(t), \quad \forall t\in \R.
\end{equation*}
It follows from Proposition \ref{prop:nomfgen} and Remark \ref{remark:afterprops} that if $\overline{q}(Y)>0$, then $S(t)$ is constant a.s.~and hence no new examples of multifractal processes can arise in this way.

To our knowledge, the list of examples of processes satisfying exact scale invariance ends here. In the next section we provide a large class of processes that are multifractal by Definition \ref{def:mf}.

\section{$L$-multifractals}\label{sec4}
One of the main drawbacks of the cascade construction is that one obtains a process defined only on a bounded time interval as the construction necessarily involves an integral scale $T$. Our goal here is to develop an alternative approach that would provide processes defined on the unbounded time intervals and satisfying Definition \ref{def:mf}. 

In our definition of multifractality we made two steps away from the scaling property of cascades \eqref{cascadesscaling} that will make this possible. Firstly, we made transition from \eqref{mfdef:motivatingfdd} to \eqref{mfdef:prop} and secondly, we allowed the scaling to hold over set $\mS$ not necessarily equal to $\T$. In fact, we will detail here the construction of multifractal processes such that $\T=(0,\infty)$ and $\mS=\Lambda=(0,1]$. Moreover, for these processes the property \eqref{mfdef:prop} will actually hold over $\mS=(0,\infty)$, but the family of scaling factors and the process will not be independent in this case. As shown in Proposition \ref{prop:nomfgenonS}, it is not possible to preserve independence and random scaling together with $\mS=(0,\infty)$.

Using this approach, an abundance of examples of processes satisfying Definition \ref{def:mf} can be obtained. Indeed, to any L\'evy process (hence any infinitely divisible distribution) and stationary process, there corresponds one multifractal process. For this class of processes the equality \eqref{MtoL01} (or \eqref{MtoL1infty}) in Proposition \ref{prop:MtoL} will not hold only for the one-dimensional marginals, but for the finite dimensional distributions. Because of this correspondence with L\'evy processes, we will call these processes $L$-multifractals. As shown in Subsection \ref{subsec:examples}, the cascades do not belong to this class.

The method for obtaining multifractal processes is inspired by the idea of Lamperti transformation which provides the correspondence between stationary and self-similar processes. If $\{Y(t), \, t \in \R\}$ is a stationary process and $H\in \R$, then the process $\{X(t),\, t>0\}$ defined by
\begin{equation}\label{e:Lamptransclassical1}
    X(t)=t^H Y(\log t), \quad t>0
\end{equation}
is self-similar with Hurst parameter $H$. Conversely, if $\{X(t), \, t > 0\}$ is $H$-ss, then
\begin{equation}\label{e:Lamptransclassical}
    Y(t)=e^{-tH} X(e^t), \quad t\in \R,
\end{equation}
defines a stationary process. Our next goal is to extend this idea to the multifractal case. The results are stated only for the case $\Lambda=(0,1]$ as the other one is analogous by Lemma \ref{lemma:Lambdacorrespondance}.

Specifying $H$ in the Lamperti transformation corresponds to specifying family $\{M(\lambda,t),\allowbreak \, \lambda \in \Lambda, t\in \mS\}$ in the multifractal case. First we show how to obtain such family satisfying properties given in Proposition \ref{prop:Mproperties} from an arbitrary L\'evy process.

\begin{lemma}\label{lemma:Mdef}
Let $L=\{L(s), \, s \geq 0\}$ be a L\'evy process. For $a \geq 0$ define a family of random variables $\{M^{(a)}(\lambda,t), \, \lambda \in (0,1], t \leq e^{a} \}$ given by
\begin{equation}\label{Madef}
  M^{(a)}(\lambda,t) = e^{L(a-\log t - \log \lambda) - L(a-\log t)}.
\end{equation}
Then there exists a family $\{M(\lambda,t), \, \lambda \in (0,1], t > 0\}$ such that for every $a \geq 0$
\begin{equation}\label{MtoMa:fdd}
  \left\{ M(\lambda,t) \right\}_{\lambda \in (0,1], \, t \in (0, e^{a}]} \overset{d}{=} \left\{ M^{(a)}(\lambda,t) \right\}_{\lambda \in (0,1], \, t \in (0, e^{a}]}
\end{equation}
and satisfying the following properties:
\begin{enumerate}[(i)]
  \item For every $\lambda \in (0,1]$, $\left\{ M(\lambda,e^u), \, u \in \R \right\}$ is a stationary process.
  \item For every $t>0$, $M(1,t)=1$ a.s.
  \item For $\lambda_1,\lambda_2 \in (0,1]$, $t>0$ and $a \geq 0$, $M^{(a)}(\lambda_1, \lambda_2 t)$ and $M^{(a)}(\lambda_2, t)$ are independent and
  \begin{equation*}
    M^{(a)}(\lambda_1 \lambda_2, t) = M^{(a)}(\lambda_1,\lambda_2 t) M^{(a)}(\lambda_2, t).
  \end{equation*}
  Moreover, $M(\lambda_1, \lambda_2 t)$ and $M(\lambda_2, t)$ are independent and
  \begin{equation*}
    M(\lambda_1 \lambda_2, t) \overset{d}{=} M(\lambda_1,\lambda_2 t) M(\lambda_2, t).
  \end{equation*}
\end{enumerate}
\end{lemma}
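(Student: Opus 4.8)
The plan is to construct the family $\{M(\lambda,t)\}$ directly from the L\'evy process via the formula \eqref{Madef} with a fixed reference level (say $a$ chosen as needed), and then use Kolmogorov's extension theorem to patch these together into a single consistent family over all $t>0$. First I would observe that the $M^{(a)}$ in \eqref{Madef} are genuinely well-defined: for $\lambda \in (0,1]$ and $t \le e^a$ the arguments $a-\log t-\log\lambda \ge a-\log t \ge 0$, so both evaluations of $L$ make sense and the increment $L(a-\log t-\log\lambda)-L(a-\log t)$ is over a nonnegative interval. I would then check the consistency needed for extension: if $a' \ge a$, then on the index set $\lambda\in(0,1]$, $t\in(0,e^a]$ the two families $\{M^{(a)}(\lambda,t)\}$ and $\{M^{(a')}(\lambda,t)\}$ have the same finite-dimensional distributions. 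This is exactly the stationarity of increments of $L$: replacing $a$ by $a'$ shifts every argument by $a'-a \ge 0$, and for any finite collection of index pairs the joint law of the corresponding increments of $L$ is invariant under a common shift. Hence the projective family $\{M^{(a)}\}_{a\ge 0}$ is consistent and Kolmogorov's theorem yields a family $\{M(\lambda,t),\,\lambda\in(0,1],\,t>0\}$ with \eqref{MtoMa:fdd}.

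Next I would verify the three listed properties, all of which reduce to properties of $M^{(a)}$ via \eqref{MtoMa:fdd}. For \emph{(i)}: fix $\lambda$ and consider $\{M(\lambda,e^u),\,u\in\R\}$. For any finite set $u_1,\dots,u_n$ choose $a$ large enough that $e^{u_j}\le e^a$ for all $j$; then $M(\lambda,e^{u_j}) \overset{d}{=} M^{(a)}(\lambda,e^{u_j}) = \exp\{L(a-u_j-\log\lambda)-L(a-u_j)\}$. A time shift $u_j\mapsto u_j+h$ shifts all arguments of $L$ by $-h$, so the joint law is unchanged by stationarity of increments of $L$ — this gives stationarity of $\{M(\lambda,e^u)\}$. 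For \emph{(ii)}: $M^{(a)}(1,t) = e^{L(a-\log t)-L(a-\log t)} = e^0 = 1$ a.s., so $M(1,t)=1$ a.s. For \emph{(iii)}: writing $s = a-\log t$, we have $M^{(a)}(\lambda_1,\lambda_2 t) = e^{L(s-\log\lambda_1-\log\lambda_2)-L(s-\log\lambda_2)}$ and $M^{(a)}(\lambda_2,t) = e^{L(s-\log\lambda_2)-L(s)}$. Their product telescopes to $e^{L(s-\log\lambda_1-\log\lambda_2)-L(s)} = M^{(a)}(\lambda_1\lambda_2,t)$, giving the pathwise identity; and since $\lambda_1,\lambda_2\in(0,1]$ the two increments are over the adjacent intervals $[s-\log\lambda_2,\,s-\log\lambda_1-\log\lambda_2]$ and $[s,\,s-\log\lambda_2]$ (all inside $[0,\infty)$ once $a$ is large enough), which are disjoint, so independence follows from the independent-increments property of $L$. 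The distributional versions of the product identity and of the independence then transfer to $M$ through \eqref{MtoMa:fdd}.

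The main obstacle I anticipate is purely bookkeeping: making sure every statement about $M(\lambda,t)$ is reduced to a statement about $M^{(a)}(\lambda,t)$ on an index set of the form $(0,1]\times(0,e^a]$ before invoking \eqref{MtoMa:fdd}, since \eqref{MtoMa:fdd} only controls finite-dimensional distributions and only on that restricted index set. For any property involving finitely many time points one picks $a$ large enough to contain them; since all three properties are of this finite-dimensional nature (stationarity is a statement about finite marginals, and \emph{(ii)}, \emph{(iii)} involve one or two points), this causes no real difficulty but needs to be said carefully. A secondary point worth a remark is that for the product identity in \emph{(iii)} one must note that the two relevant increments of $L$ are indeed over nonnegative arguments and over disjoint intervals — both guaranteed by $\lambda_1,\lambda_2\in(0,1]$ (so $\log\lambda_i\le 0$) and by choosing the reference level $a$ appropriately. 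I would also note that the stationarity in \emph{(i)} is stated for the two-sided index set $u\in\R$, which is consistent because the extended family $M$ is defined for all $t>0$, i.e.\ all $u=\log t\in\R$.
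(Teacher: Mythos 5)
Your proposal is correct and follows essentially the same route as the paper: consistency of the families $\{M^{(a)}\}$ via the shift-invariance of the increments of $L$ (the paper phrases this through $\widehat{L}^{(\tau)}(s)=L(\tau+s)-L(\tau)$), Kolmogorov extension, and then the same shift argument for \emph{(i)}, the trivial computation for \emph{(ii)}, and the telescoping identity plus independence of increments over adjacent intervals for \emph{(iii)}. No gaps.
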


\begin{proof}
If we denote for $\tau\geq 0$, $\widehat{L}^{(\tau)}(s)=L(\tau+s)-L(\tau)$, then clearly $\{\widehat{L}^{(\tau)}(s)\}_{s\geq 0} \overset{d}{=} \{ L(s)\}_{s\geq 0}$. Taking $a \geq b \geq 0$ and so $e^{a} \geq e^{b}$ we get that
\begin{equation}\label{lemma:Mdef:proof1}
\begin{aligned}
  &\left\{ M^{(a)}(\lambda,t) \right\}_{\lambda \in (0,1], \, t \in (0, e^{b}]}\\
  &= \left\{ e^{ L(a-b + b - \log t - \log \lambda) - L(a-b+b-\log t) } \right\}_{\lambda \in (0,1], \, t \in (0, e^{b}]} \\
  &= \left\{ e^{ L(a-b+b- \log t - \log \lambda) - L(a-b) - \left( L(a-b+b-\log t) - L(a-b)\right) } \right\}_{\lambda \in (0,1], \, t \in (0, e^{b}]} \\
  &= \left\{ e^{ \widehat{L}^{(a-b)}(b-\log t - \log \lambda) - \widehat{L}^{(a-b)}(b-\log t) } \right\}_{\lambda \in (0,1], \, t \in (0, e^{b}]} \\
  &\overset{d}{=} \left\{ e^{ L(b - \log t - \log \lambda) - L(b - \log t) } \right\}_{\lambda \in (0,1], \, t \in (0, e^{b}]} \\
  &=  \left\{ M^{(b)}(\lambda,t) \right\}_{\lambda \in (0,1], \, t \in (0, e^{b}]}.
\end{aligned}
\end{equation}
Given $(\lambda_1,t_1),\dots,(\lambda_n,t_n) \in (0,1] \times (0,\infty)$ we can find $a\geq 0$ such that $\max_{i=1,\dots,n}\allowbreak t_i \leq e^{a}$. Let $\mu_{(\lambda_1,t_1),\dots,(\lambda_n,t_n)}$ denote the distribution of $( M^{(a)}(\lambda_1,t_1),\allowbreak\dots, \allowbreak M^{(a)}(\lambda_n,t_n) )$, which does not depend on $a$ as shown above. The measures $\mu_{(\lambda_1,t_1),\dots,(\lambda_n,t_n)}$, $(\lambda_1,t_1),\dots,(\lambda_n,t_n)\allowbreak \in (0,1] \times (0,\infty)$, $n \in \N$ form a consistent family and by the Kolmogorov extension theorem there exists a two-parameter process $\{M(\lambda,t), \, \lambda \in (0,1], t > 0 \}$ with finite dimensional distributions $\mu_{(\lambda_1,t_1),\dots,(\lambda_n,t_n)}$, $(\lambda_1,t_1),\dots,(\lambda_n,t_n) \in (0,1] \times (0,\infty)$, $n \in \N$ such that \eqref{MtoMa:fdd} holds. We now prove properties \textit{(i)}-\textit{(iii)}.

\textit{(i)} It is enough to show that for arbitrary $h>0$ and $a \geq 0$ it holds that
\begin{equation*}
  \left\{ M(s,e^{u}) \right\}_{u \leq a} \overset{d}{=} \left\{ M(s,e^{u-h}) \right\}_{u \leq a}.
\end{equation*}
Since $u \leq a$ implies $e^{u-h}\leq e^u\leq e^{a}$ we have
\begin{align*}
  \left\{ M(s,e^{u-h}) \right\}_{u \leq a} &\overset{d}{=} \left\{ M^{(a)}(\lambda,e^{u-h}) \right\}_{u \leq a}\\
  &= \left\{ e^{L(a-u+h - \log \lambda) - L(a-u+h)} \right\}_{u \leq a}\\
  &= \left\{ e^{\widehat{L}^{(h)}(a-u - \log \lambda) - \widehat{L}^{(h)}(a-u)} \right\}_{u \leq a}\\
  &\overset{d}{=} \left\{ e^{L(a-u - \log \lambda) - L(a-u)} \right\}_{u \leq a}\\
  &= \left\{ M^{(a)}(\lambda,e^{u}) \right\}_{u \leq a}\\
  &\overset{d}{=} \left\{ M(\lambda,e^{u}) \right\}_{u \leq a}.
\end{align*}

\textit{(ii)} This is clear from \eqref{Madef} and \eqref{MtoMa:fdd}.

\textit{(iii)} By taking $a$ such that $t \leq e^{a}$ we have
\begin{align*}
  M(\lambda_1 \lambda_2, t) &\overset{d}{=}  M^{(a)}(\lambda_1 \lambda_2,t) = e^{L(a - \log t - \log \lambda_1 - \log \lambda_2) - L(a-\log t)}\\
  &= e^{L(a-\log t - \log \lambda_1 - \log \lambda_2) - L(a-\log t - \log \lambda_2) + L(a-\log t - \log \lambda_2)- L(a-\log t)}\\
  &= M^{(a)}(\lambda_1, \lambda_2 t)  M^{(a)}(\lambda_2, t),
\end{align*}
which are independent by the independence of increments of $L$. Since
\begin{align*}
  \left( M(\lambda_1, \lambda_2 t), M(\lambda_2, t)\right) \overset{d}{=}  \left( M^{(a)}(\lambda_1, \lambda_2 t), M^{(a)}(\lambda_2, t) \right),
\end{align*}
the statement follows.
\end{proof}

A family $\{M(\lambda,t), \, \lambda \in (0,1], t > 0\}$ will be said to \textbf{correspond to a L\'evy process} $L$ if for every $a\geq 0$ \eqref{MtoMa:fdd} holds with $\{M^{(a)}(\lambda,t), \, \lambda \in (0,1], t \leq e^{a} \}$ given by \eqref{Madef}. We can use the previous construction of such family to build the multifractal process from a stationary process. The multifractal process obtained in this way will have $\{M(\lambda,t), \, \lambda \in (0,1], t \in (0,1])\}$ as the family of scaling factors. This represents a multifractal analog of the Lamperti transformation.

\begin{theorem}\label{thm:MFLamperti}
Let $L=\{L(s), \, s \geq 0\}$ be a L\'evy process and $Y=\{Y(u), \, u \in \R\}$ a stationary process independent of $L$. For $a \geq 0$ define the process $\{X^{(a)}(t), \, t \in (0,e^{a}] \}$ by setting
\begin{equation}\label{Xadef}
  X^{(a)}(t) = e^{L(a-\log t) - L(a)} Y(\log t).
\end{equation}
Then there exists a process $X=\{X(t), \, t > 0 \}$ such that for every $a \geq 0$
\begin{equation}\label{XtoXa:fdd}
  \left\{ X(t) \right\}_{t \in (0,e^{a}]} \overset{d}{=} \left\{ X^{(a)}(t) \right\}_{t \in (0,e^{a}]}.
\end{equation}
and for every $\lambda \in (0,1]$
\begin{equation}\label{XisMF}
  \left\{ X(\lambda t) \right\}_{t\geq 0} \overset{d}{=} \left\{ M(\lambda,t) X(t) \right\}_{t\geq 0},
\end{equation}
where $\{M(\lambda,t), \, \lambda \in (0,1], t > 0 \}$ is the family of scaling factors corresponding to a L\'evy process $L$.

The process $X$ and the family $\{M(\lambda,t), \, \lambda \in (0,1], t \in (0,1] \}$ are independent, hence $X$ is multifractal with $\mS=\Lambda=(0,1]$.
\end{theorem}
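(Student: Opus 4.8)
The argument has three parts: (a) produce the process $X$, (b) verify the scaling identity \eqref{XisMF}, (c) establish the independence of $X$ and the scaling factors restricted to $(0,1]$. For (a) I would copy the mechanism of the proof of Lemma \ref{lemma:Mdef}. For $0\le b\le a$ and $t\le e^{b}$, setting $\widehat L^{(a-b)}(s)=L(a-b+s)-L(a-b)$ — which is again a L\'evy process distributed as $L$ and, as $Y\perp L$, also independent of $Y$ — one has $L(a-\log t)-L(a)=\widehat L^{(a-b)}(b-\log t)-\widehat L^{(a-b)}(b)$, so
\[
\left\{X^{(a)}(t)\right\}_{t\in(0,e^{b}]}=\left\{e^{\widehat L^{(a-b)}(b-\log t)-\widehat L^{(a-b)}(b)}\,Y(\log t)\right\}_{t\in(0,e^{b}]}\overset{d}{=}\left\{X^{(b)}(t)\right\}_{t\in(0,e^{b}]}.
\]
Carrying this out jointly for the pair $(X^{(a)},M^{(a)})$ driven by the \emph{same} $L$, the finite-dimensional laws obtained by choosing any $a$ with $\max_i t_i\le e^{a}$ are consistent, and Kolmogorov's extension theorem yields $X=\{X(t),t>0\}$ (together with $M$ on the same space) satisfying \eqref{XtoXa:fdd}.

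\textbf{The scaling relation.} Fix $\lambda\in(0,1]$ and $t_1,\dots,t_n>0$, and take $a$ with $\max_i t_i\le e^{a}$ (hence also $\lambda t_i\le e^{a}$). The key is the one-line identity
\[
M^{(a)}(\lambda,t_i)\,X^{(a)}(t_i)=e^{L(a-\log t_i-\log\lambda)-L(a-\log t_i)}\,e^{L(a-\log t_i)-L(a)}\,Y(\log t_i)=e^{L(a-\log t_i-\log\lambda)-L(a)}\,Y(\log t_i),
\]
which shows that $\bigl(M^{(a)}(\lambda,t_i)X^{(a)}(t_i)\bigr)_i$ and $\bigl(X^{(a)}(\lambda t_i)\bigr)_i=\bigl(e^{L(a-\log t_i-\log\lambda)-L(a)}Y(\log t_i+\log\lambda)\bigr)_i$ carry the identical ``L\'evy part'' $V_i:=e^{L(a-\log t_i-\log\lambda)-L(a)}$ and differ only in the argument of $Y$. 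Since $L\perp Y$, the vector $(V_i)_i$ is independent of $(Y(\log t_i))_i$ and of $(Y(\log t_i+\log\lambda))_i$, while stationarity of $Y$ gives $(Y(\log t_i+\log\lambda))_i\overset{d}{=}(Y(\log t_i))_i$; hence $\bigl(X^{(a)}(\lambda t_i)\bigr)_i\overset{d}{=}\bigl(M^{(a)}(\lambda,t_i)X^{(a)}(t_i)\bigr)_i$, and \eqref{XtoXa:fdd} together with \eqref{MtoMa:fdd} transfers this to \eqref{XisMF}.

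\textbf{Independence over $(0,1]$, and the main obstacle.} For the last assertion I would realize everything over a single two-sided L\'evy process: extend $L$ to $\{\overline L(s),s\in\R\}$ with stationary independent increments, take $Y$ independent of $\overline L$, and set $X(t)=e^{\overline L(-\log t)}Y(\log t)$ and $M(\lambda,t)=e^{\overline L(-\log t-\log\lambda)-\overline L(-\log t)}$; stationarity of the increments of $\overline L$ shows these have the required finite-dimensional distributions and still satisfy \eqref{XisMF}. For $\lambda,t\in(0,1]$ the factor $M(\lambda,t)$ is built only from increments of $\overline L$ over subintervals of a half-line, and the remaining task is to choose the realization so that the $\sigma$-algebra generated by $\{M(\lambda,t):\lambda,t\in(0,1]\}$ is independent of the one generated by all of $X$ and by $Y$; accounting precisely for which increments of $\overline L$ enter each object is exactly where care is needed, and this is the step I expect to be the main obstacle. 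Once it is in place, restricting \eqref{XisMF} to $t\in(0,1]$ (so that $\lambda t\in(0,1]$ as well) is precisely relation \eqref{mfdef:prop} of Definition \ref{def:mf} with $\Lambda=\mS=(0,1]$: the family $\{M(\lambda,t),t\in(0,1]\}$ is identically distributed in $t$ by Lemma \ref{lemma:Mdef}(i) and independent of $X$, so $X$ is multifractal.
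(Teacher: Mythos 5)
Your construction of $X$ (consistency of the laws of $X^{(a)}$ over $a$, then Kolmogorov extension) and your verification of \eqref{XisMF} (split $L(a-\log t-\log\lambda)-L(a)$ at $L(a-\log t)$, then shift the argument of $Y$ using stationarity and $L\perp Y$) are exactly the paper's argument; those two parts are complete. The genuine gap is the one you flag yourself: the independence of the scaling family from the process. The paper does not introduce a two-sided extension of $L$; it observes that for $t\in(0,1]$ and $\lambda\in(0,1]$ one has $-\log t\geq 0$ and $-\log\lambda\geq 0$, so in \eqref{Xadef} and \eqref{Madef} the factor $X^{(a)}(t)$ involves $Y$ and the increment of $L$ over $[a,\,a-\log t]$, while $M^{(a)}(\lambda,t)$ involves the increment over the adjacent, disjoint interval $[a-\log t,\,a-\log t-\log\lambda]$; independence of increments and $L\perp Y$ then give independence of the two factors for each fixed $t$.

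Your suspicion that ``accounting for which increments enter each object'' is the delicate step is well founded, because the disjointness above only yields independence of $M^{(a)}(\lambda,t)$ and $X^{(a)}(t)$ for the \emph{same} $t$: for $t_1<t_2\leq 1$ the variable $M^{(a)}(\lambda,t_2)$ uses increments over $[a-\log t_2,\,a-\log t_2-\log\lambda]$, which overlaps $[a,\,a-\log t_1]$, so in the coupled realization the whole family is \emph{not} independent of the whole process; and one cannot simply substitute an independent copy of that family without changing the joint law of the products. Concretely, with $Y\equiv 1$, $a=0$, $L$ standard Brownian motion, $t_i=e^{-u_i}$ and $\lambda=e^{-\ell}$ with $u_2+\ell\leq u_1$, the left side of \eqref{XisMF} has log-covariance $u_2+\ell$ at $(t_1,t_2)$, whereas an independent copy of the family gives $\operatorname{Cov}\bigl(\log\widetilde M_1+L(u_1),\,\log\widetilde M_2+L(u_2)\bigr)=u_2$. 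What does close the argument — and what Definition \ref{def:mf} actually requires, namely \emph{some} identically distributed family independent of $X$ — is the decomposition $L(s+\ell)=L(\ell)+\bigl(L(s+\ell)-L(\ell)\bigr)$: writing $X(\lambda t)=e^{L(\ell)}\,e^{L(-\log t+\ell)-L(\ell)}\,Y(\log t+\log\lambda)$ for $t\in(0,1]$ exhibits a \emph{single} factor $e^{L(\ell)}$, independent of the process $\{e^{L(-\log t+\ell)-L(\ell)}Y(\log t+\log\lambda)\}_{t\in(0,1]}\overset{d}{=}\{X(t)\}_{t\in(0,1]}$, so the constant-in-$t$ family $M(\lambda,t)=e^{L'(-\log\lambda)}$ (with $L'$ an independent copy of $L$) witnesses \eqref{mfdef:prop} on $\mS=\Lambda=(0,1]$ — in fact even the stronger property \eqref{mfdef:motivatingfdd}. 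So to finish your proof, either make the fixed-$t$ disjointness statement explicit (the paper's route) or exhibit this constant witness family directly.
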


\begin{proof}
Denoting again for $\tau\geq 0$, $\widehat{L}^{(\tau)}(s)=L(\tau+s)-L(\tau)$, we have for $a \geq b \geq 0$
\begin{align*}
  \left\{ X^{(a)}(t) \right\}_{t \in (0,e^{b}]} &= \left\{ e^{L(a-b+b-\log t) - L(a-b+b)} Y(\log t) \right\}_{t \in (0, e^{b}]}\\
  &= \left\{ e^{L(a-b+b-\log t)-L(a-b) - \left(L(a-b+b)- L(a-b) \right)} Y(\log t) \right\}_{t \in (0, e^{b}]}\\
  &= \left\{ e^{\widehat{L}^{(a-b)}(b-\log t) - \widehat{L}^{(a-b)}(b)} Y(\log t) \right\}_{t \in (0, e^{b}]}\\
  &\overset{d}{=} \left\{ e^{L(b-\log t) - L(b)} Y(\log t) \right\}_{t \in (0, e^{b}]}\\
  &= \left\{ X^{(b)}(t) \right\}_{t \in (0, e^{b}]}.
\end{align*}
As in Lemma \ref{lemma:Mdef}, this shows that finite dimensional distributions of $\{X^{(a)}(t), \, t \leq e^{a} \}$, $a\geq 0$ do not depend on $a$ and form a consistent family. An appeal to the Kolmogorov extension theorem gives the existence of $X$ satisfying \eqref{XtoXa:fdd}.

To show multifractality of $X$ it suffices to show \eqref{XisMF} holds over $t \in (0,e^{a}]$ for arbitrary $a\geq 0$. By using the notation of Lemma \ref{lemma:Mdef} and stationarity of $Y$ we have
\begin{align}
  \left\{ X(\lambda t) \right\}_{t \in (0,e^{a}]} &\overset{d}{=} \left\{ X^{(a)}(\lambda t) \right\}_{t \in (0,e^{a}]}\nonumber \\
  &= \left\{ e^{L(a-\log t - \log \lambda)-L(a)} Y(\log t + \log \lambda) \right\}_{t \in (0,e^{a}]}\nonumber \\
  &= \left\{ e^{L(a-\log t - \log \lambda)-L(a-\log t)} e^{L(a-\log t)-L(a)} Y(\log t + \log \lambda) \right\}_{t \in (0,e^{a}]}\nonumber \\
  &\overset{d}{=} \left\{ e^{L(a-\log t - \log \lambda)-L(a-\log t)} e^{L(a-\log t)-L(a)} Y(\log t) \right\}_{t \in (0,e^{a}]}\nonumber \\
  &= \left\{ M^{(a)}(\lambda, t) X^{(a)}(t) \right\}_{t \in (0,e^{a}]}  \label{thm:MFLamperti:proofline}\\
  &\overset{d}{=} \left\{ M(\lambda,t) X(t) \right\}_{t \in (0,e^{a}]}.\nonumber
\end{align}
When $t\in(0,1]$, two factors in \eqref{thm:MFLamperti:proofline} are independent due to independence of increments of $L$ and independence of $L$ and $Y$ and hence family $\{M(\lambda,t), \, \lambda \in (0,1], t \in (0,1] \}$ can be taken independent of $\{X(t)\}$.
\end{proof}

A multifractal process $X$ will be said to be \textbf{$L$-multifractal} if its family of scaling factors corresponds to a L\'evy process $L$. Every process $X$ obtained as in Theorem \ref{thm:MFLamperti} is $L$-multifractal.

\begin{remark}\label{rem:depconstr}
Note that the equality of finite dimensional distributions \eqref{XisMF} holds over $(0,\infty)$, but the scaling family is independent from the process only over $(0,1]$, hence $\mS=(0,1]$. Putting $X^{(a)}(t) = e^{L(a+\log t) - L(a)} Y(\log t)$ for $t\in(e^{-a},\infty)$ instead of \eqref{Xadef}, yields by similar arguments a process for which \eqref{XisMF} holds over $(0,\infty)$. The random field $M$ corresponds to the one constructed similarly as in Lemma \ref{lemma:Mdef} but with $M^{(a)}(\lambda,t) = e^{L(a+\log t - \log \lambda) - L(a+\log t)}$, $\lambda \in (0,1]$, $t\in(e^{-a},\infty)$, replacing \eqref{Madef}. The independence does not hold in \eqref{XisMF} hence the process $X$ is not multifractal by Definition \ref{def:mf}.
\end{remark}

The analog of the inverse Lamperti transformation also holds. Indeed, every $L$-multifractal corresponds to a stationary process in a sense given by the following theorem.

\begin{theorem}\label{thm:MFLampertiInv}
Suppose $\{X(t), \, t>0\}$ is $L$-multifractal with the scaling family $\{M(\lambda,t), \,\allowbreak \lambda \in (0,1], t > 0 \}$. Then the process $\{Y(s), \, s \geq 0 \}$ defined by
\begin{equation*}
  Y(s) =  M(e^{-s},e^s) X(e^s).
\end{equation*}
is stationary.
\end{theorem}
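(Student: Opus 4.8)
The plan is to mirror the proof of Theorem \ref{thm:MFLamperti}: reduce the claim to finite dimensional distributions at times bounded by some $e^a$, substitute the explicit description of the scaling family via \eqref{Madef} and of $X$ via \eqref{Xadef}, and exploit the stationarity of $Y$ together with the stationary-increments structure of $L$. Concretely, fix $s_1 < \dots < s_n$ with $s_i \geq 0$ and choose $a \geq 0$ large enough that $e^{s_i} \leq e^a$ for all $i$ (we may even take $a = s_n$). By \eqref{XtoXa:fdd} and \eqref{MtoMa:fdd} (applied jointly, using the independence of $X$ and the scaling family over $(0,1]$ — note $e^{-s_i} \in (0,1]$ and the time argument $e^{s_i}$ may exceed $1$, so one must be a little careful about which independence is being invoked), we can replace $X$ by $X^{(a)}$ and $M$ by $M^{(a)}$ in the definition of $Y(s_i)$. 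Here the subtle point worth checking is that the joint law of $\left(M(e^{-s_i}, e^{s_i}), X(e^{s_i})\right)_{i=1}^n$ coincides with that of $\left(M^{(a)}(e^{-s_i}, e^{s_i}), X^{(a)}(e^{s_i})\right)_{i=1}^n$; this follows because both families are built from the \emph{same} underlying L\'evy process $L$ and stationary process $Y$ in the $L$-multifractal construction, so one should argue using that common realization rather than the marginal consistency statements alone.

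Once everything is expressed through $L$ and $Y$, the computation is explicit. We have
\begin{align*}
  Y(s) &= M^{(a)}(e^{-s}, e^s)\, X^{(a)}(e^s) \\
  &= e^{L(a - \log e^s + s) - L(a - \log e^s)}\, e^{L(a - \log e^s) - L(a)}\, Y(\log e^s) \\
  &= e^{L(a) - L(a-s)}\, Y(s),
\end{align*}
since $a - \log e^s - \log e^{-s} = a$ and $\log e^s = s$. Thus, up to finite dimensional equality in law, $Y(s) = e^{L(a) - L(a-s)} Y(s)$ where on the right $Y$ is the given stationary process independent of $L$. Writing $\widetilde{L}(u) := L(a) - L(a - u)$ for $u \in [0, a]$, the time-reversal property of L\'evy processes gives that $\{\widetilde{L}(u)\}_{u \in [0,a]} \overset{d}{=} \{L(u)\}_{u \in [0,a]}$, and, more to the point, $\{\widetilde{L}\}$ has \emph{stationary increments}: $\widetilde{L}(u+h) - \widetilde{L}(u) = L(a-u) - L(a-u-h)$, whose law depends only on $h$. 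Hence $\left(\widetilde{L}(s_1), \dots, \widetilde{L}(s_n)\right)$ has the same law as $\left(\widetilde{L}(s_1 + h), \dots, \widetilde{L}(s_n + h)\right)$ once $a$ is enlarged so that $s_n + h \leq a$, and combining this with the (strict) stationarity of $Y$ and the independence of $Y$ and $L$ yields
\begin{equation*}
  \left( Y(s_1), \dots, Y(s_n) \right) \overset{d}{=} \left( Y(s_1 + h), \dots, Y(s_n + h) \right)
\end{equation*}
for every $h \geq 0$, which is exactly stationarity of $\{Y(s), \, s \geq 0\}$.

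The main obstacle I anticipate is not the algebra — which collapses neatly because $M$ exactly cancels the $\lambda$-dependence introduced by the shift — but the bookkeeping around the definition of an $L$-multifractal: Definition of $L$-multifractal only asserts that the scaling family corresponds to a L\'evy process $L$ in the sense of \eqref{MtoMa:fdd}, and a priori it does not pin down the \emph{joint} law of $M$ and $X$ beyond what Theorem \ref{thm:MFLamperti} provides for processes built by that construction. So I would either (a) restrict the statement's proof to $X$ arising as in Theorem \ref{thm:MFLamperti}, where the common $(L, Y)$ realization is available, or (b) check that the relation \eqref{XisMF} together with \eqref{MtoMa:fdd} already forces the joint finite dimensional law of $\left(M(e^{-s}, e^s), X(e^s)\right)_s$ to be the one needed — this is the step that deserves the most care, and I would spell out which independence/consistency facts are used at each substitution. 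Everything else (time reversal, stationary increments of the reversed L\'evy process, independence of $Y$ and $L$) is routine.
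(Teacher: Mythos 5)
There is a genuine error in your computation, and the argument you build on top of it does not hold. With $\lambda=e^{-s}$ and $t=e^{s}$, equation \eqref{Madef} gives $M^{(a)}(e^{-s},e^{s})=e^{L(a)-L(a-s)}$ and \eqref{Xadef} gives $X^{(a)}(e^{s})=e^{L(a-s)-L(a)}Y(s)$, so the product is $e^{L(a)-L(a-s)}\,e^{L(a-s)-L(a)}\,Y(s)=Y(s)$: the two exponential factors cancel \emph{completely}, and under the common $(L,Y)$ realization the transformed process is exactly the original stationary process $Y$. Your line ``$=e^{L(a)-L(a-s)}Y(s)$'' keeps only the first factor. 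The subsequent argument is then not merely unnecessary but invalid: you assert that $\bigl(\widetilde{L}(s_1),\dots,\widetilde{L}(s_n)\bigr)\overset{d}{=}\bigl(\widetilde{L}(s_1+h),\dots,\widetilde{L}(s_n+h)\bigr)$ because $\widetilde{L}(u)=L(a)-L(a-u)$ has stationary increments, but stationary increments do not imply stationarity of the process itself — for Brownian motion $\widetilde{L}(s_1)$ has variance $s_1$ while $\widetilde{L}(s_1+h)$ has variance $s_1+h$. So a process of the form $e^{\widetilde{L}(s)}Y(s)$ would in general \emph{not} be stationary, and if your formula were right the theorem would be false. Fixing the algebra rescues everything at once, so the error is self-inflicted rather than fatal to the strategy.

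Two further remarks. First, your structural concern — that the definition of an $L$-multifractal pins down the law of $M$ and the scaling relation \eqref{XisMF} but not a priori the full joint law of $(M,X)$ needed to substitute the explicit $(L,Y)$ representation — is legitimate and worth raising; the intended reading is that $X$ arises from the construction of Theorem \ref{thm:MFLamperti}, where the common realization is available. Second, the paper's own proof takes a different and more abstract route: for $h>0$ it writes $M(e^{-s-h},e^{s+h})\overset{d}{=}M(e^{-s},e^{s})M(e^{-h},e^{s+h})$ using the pathwise decomposition in Lemma \ref{lemma:Mdef}\textit{(iii)}, then absorbs $M(e^{-h},e^{s+h})X(e^{s+h})$ into $X(e^{s})$ via \eqref{XisMF}, never unpacking $L$ and $Y$ explicitly. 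Your substitution approach, once corrected, is arguably more transparent (it exhibits $Y(s)=M(e^{-s},e^{s})X(e^{s})$ as literally the inverse of the forward transformation), but it applies only to processes built as in Theorem \ref{thm:MFLamperti}, whereas the paper's argument uses only the decomposition property of the scaling family and the scaling identity.
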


\begin{proof}
For arbitrary $h>0$ we have
\begin{align*}
  \left\{ Y(s+h) \right\}_{s \geq 0} &= \left\{ M(e^{-s-h},e^{s+h}) X(e^{s+h}) \right\}_{s \geq 0} \\
  &\overset{d}{=} \left\{ M(e^{-s},e^{s})  M(e^{-h},e^{s+h}) X(e^{s+h}) \right\}_{s \geq 0}\\
  &\overset{d}{=} \left\{ M(e^{-s},e^{s})  X(e^{s}) \right\}_{s \geq 0}\\
  &\overset{d}{=} \left\{ Y(s) \right\}_{s \geq 0}.
\end{align*}
\end{proof}

Let us mention that if in Theorems \ref{thm:MFLamperti} and \ref{thm:MFLampertiInv} $L(s)=-Hs$, $s\geq 0$, then one obtains the classical form of the Lamperti transformation.

\section{Properties of $L$-multifractal processes}\label{sec5}

In this section we derive several properties of $L$-multifractal processes defined in Theorem \ref{thm:MFLamperti}.

\subsection{Scaling of moments}
Since the process $X$ from Theorem \ref{thm:MFLamperti} is multifractal, Proposition \ref{prop:momscalgeneral} implies the scaling of moments holds. By using \eqref{Xadef}, we can actually prove more.

\begin{proposition}
Let $X$ be a process obtained in Theorem \ref{thm:MFLamperti} from L\'evy process $L$ and stationary process $Y$. If $q\in \R$ is such that
\begin{equation*}
\E \left[ e^{qL(1)} \right]< \infty,  \ \E \left[ e^{-qL(1)} \right]< \infty \ \text{ and } \ \E|Y(1)|^q<\infty, 
\end{equation*}
then $\E|X(t)|^q<\infty$ for every $t>0$. If $\psi$ is the Laplace exponent of $L$,  $\E \left[ e^{q L(s)} \right] = e^{s \psi(q)}$, then
\begin{equation}\label{L-MF:moscal}
  \E|X(t)|^q = \begin{cases}
  t^{-\psi(q)} \E|X(1)|^q, & \text{ if } t \leq 1, \\
  t^{\psi(-q)} \E|X(1)|^q, & \text{ if } t > 1.
  \end{cases}
\end{equation}
\end{proposition}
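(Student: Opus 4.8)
The plan is to use the explicit representation \eqref{Xadef} of $X$ from Theorem \ref{thm:MFLamperti}, which reduces everything to a computation involving increments of the L\'evy process $L$ and the stationary process $Y$. Fix $t>0$ and pick any $a\geq 0$ with $t\leq e^a$. By \eqref{XtoXa:fdd} we have $X(t)\overset{d}{=} X^{(a)}(t)=e^{L(a-\log t)-L(a)}Y(\log t)$. The two factors are independent, since $L$ and $Y$ are independent, so $\E|X(t)|^q = \E\left[ e^{q(L(a-\log t)-L(a))}\right]\cdot \E|Y(\log t)|^q$. For the second factor, stationarity of $Y$ gives $\E|Y(\log t)|^q = \E|Y(0)|^q = \E|Y(1)|^q$, which is finite by hypothesis. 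For the first factor I would split into cases: if $t\leq 1$ then $a-\log t\geq a\geq 0$, so $L(a-\log t)-L(a)$ has the law of $L(-\log t)$ (increment over a positive time lag), and $\E\left[e^{qL(-\log t)}\right]=e^{-\log t\,\psi(q)}=t^{-\psi(q)}$; if $t>1$ then $a-\log t < a$, and $L(a)-L(a-\log t)$ has the law of $L(\log t)$, so $L(a-\log t)-L(a)\overset{d}{=}-L(\log t)$ and $\E\left[e^{q(L(a-\log t)-L(a))}\right]=\E\left[e^{-qL(\log t)}\right]=e^{\log t\,\psi(-q)}=t^{\psi(-q)}$.

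To turn this into the stated formula I would first establish finiteness. The hypotheses $\E[e^{qL(1)}]<\infty$ and $\E[e^{-qL(1)}]<\infty$ ensure that $\psi(q)$ and $\psi(-q)$ are both finite, and since for a L\'evy process the moment generating function at a fixed argument is finite for all times as soon as it is finite at time $1$ (a standard fact, e.g.\ via $\E[e^{qL(s)}]=e^{s\psi(q)}$ for $s$ in the natural domain), both exponential factors above are finite for every $t>0$; combined with $\E|Y(1)|^q<\infty$ this gives $\E|X(t)|^q<\infty$ for all $t>0$. Then the two case computations above yield $\E|X(t)|^q = t^{-\psi(q)}\E|Y(1)|^q$ for $t\leq 1$ and $\E|X(t)|^q = t^{\psi(-q)}\E|Y(1)|^q$ for $t>1$. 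Finally, evaluating at $t=1$ gives $\E|X(1)|^q = \E|Y(1)|^q$, so $\E|Y(1)|^q$ may be replaced by $\E|X(1)|^q$ throughout, producing exactly \eqref{L-MF:moscal}.

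The one point that needs a little care — and is the main obstacle, such as it is — is the sign bookkeeping in the exponent when $t>1$: the increment $L(a-\log t)-L(a)$ runs "backwards" in time, so one must be careful to write it as $-\big(L(a)-L(a-\log t)\big)$ and then invoke stationarity of increments to identify $L(a)-L(a-\log t)\overset{d}{=}L(\log t)$, giving the factor $t^{\psi(-q)}$ rather than $t^{\psi(q)}$. This is also why the hypothesis includes \emph{both} $\E[e^{qL(1)}]<\infty$ and $\E[e^{-qL(1)}]<\infty$: the first controls the $t\leq 1$ regime and the second the $t>1$ regime. A subtlety worth a sentence is that the answer must not depend on the auxiliary parameter $a$; this is automatic since $L(a-\log t)-L(a)$ is, for each fixed $a\geq \log t$, an increment of $L$ over a lag of length $|\log t|$ and hence has a law independent of $a$ by stationarity of increments — consistent with the fact, already noted in the proof of Theorem \ref{thm:MFLamperti}, that the finite-dimensional distributions of $X^{(a)}$ do not depend on $a$.
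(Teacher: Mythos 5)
Your proof is correct and follows essentially the same route as the paper: represent $X(t)$ via $X^{(a)}(t)=e^{L(a-\log t)-L(a)}Y(\log t)$, factor the $q$-th moment by independence, and use stationarity of $Y$ together with stationarity of increments of $L$ to reduce the two cases to $t^{-\psi(q)}$ and $t^{\psi(-q)}$, identifying $\E|X(1)|^q=\E|Y(1)|^q$. Your additional remarks on finiteness, the sign bookkeeping for $t>1$, and the independence of the choice of $a$ are all accurate and only make explicit what the paper leaves implicit.
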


\begin{proof}
By taking $a\geq 0$ such that $t \leq e^a$ we have
\begin{align*}
  \E|X(t)|^q = \E \left[ e^{q\left( L(a-\log t) - L(a) \right)} \right] \E|Y(\log t)|^q &= \begin{cases}
  \E \left[ e^{q L(-\log t)} \right] \E|X(1)|^q, & \text{ if } t \leq 1, \\[1ex]
  \E \left[ e^{-q L(\log t)} \right] \E|X(1)|^q, & \text{ if } t > 1.
  \end{cases}\\
  &= \begin{cases}
  t^{-\psi(q)} \E|X(1)|^q, & \text{ if } t \leq 1, \\
  t^{\psi(-q)} \E|X(1)|^q, & \text{ if } t > 1.
  \end{cases}  
\end{align*}
\end{proof}

It is important to note that \eqref{L-MF:moscal} does not contradict Remark \ref{rem:noscalingofmom}, since in \eqref{L-MF:moscal} we actually have $\E|X(t)|^q = t^{\tau(q, t)} \E|X(1)|^q$ with the exponent $\tau$ depending additionally on $t$:
\begin{equation*}
  \tau(q,t) = \begin{cases}
  -\psi(q), & \text{ if } t \leq 1, \\
  \psi(-q), & \text{ if } t > 1.
  \end{cases}
\end{equation*}

In terms of the Mellin transforms, we similarly obtain for $\theta \in \R$
\begin{equation*}
\mathcal{M}_{|X(t)|}(\theta i) = \begin{cases}
t^{-\Psi(\theta)} \mathcal{M}_{|X(1)|}(\theta i), & \text{ if } t \leq 1, \\
t^{\Psi(-\theta)} \mathcal{M}_{|X(1)|}(\theta i), & \text{ if } t > 1,
\end{cases}
\end{equation*}
where $\Psi$ is the characteristic exponent of $L$, $\Psi(\theta) = \log \E \left[ e^{i\theta L(1)} \right]$.

\subsection{Stationarity of increments}
When it comes to applications like finance, turbulence and other fields, an important feature of stochastic process used for modeling is stationarity of increments. This provides applicability of statistical methods and is often plausible to assume. However, even for self-similar processes this may be hard to achieve. In fact, as noted by \cite{barndorff1999stationary}, there is no simple characterization of marginal laws of self-similar processes with stationary increments. 

We will first show that, unfortunately, the process $X$ constructed in Theorem \ref{thm:MFLamperti} with finite variance can not have stationary increments if considered on the time set $\mathcal{T}=(0,\infty)$. However, if we restrict the time set to, say $\mathcal{T}=(0,1]$, then it may be possible for $X$ to have stationary increments. We will show that by appropriately choosing the stationary process $Y$ in Theorem \ref{thm:MFLamperti} and restricting the time set, one can obtain a multifractal process with second-order stationary increments meaning that its covariance function is the same as if it has stationary increments. 

Suppose $X$ is an $L$-multifractal process defined in Theorem \ref{thm:MFLamperti} with finite variance. By taking $a\geq 0$ such that $e^a\geq t > s$, we have directly from \eqref{Xadef} that
\begin{equation}\label{e:covLMFgen}
\begin{aligned}
&\E X(t)X(s) = \E \left[ e^{L(a-\log t) - L(a)} Y(\log t) e^{L(a-\log s) - L(a)} Y(\log s)\right]\\
&= \E \left[ e^{L(a-\log t) - L(a) + L(a-\log s) - L(a)}\right] \E \left[ Y(\log t) Y(\log s)\right]\\
&=\begin{cases}
\E \left[ e^{-2 \left( L(a) - L(a-\log s)\right)} \right] \E \left[e^{-\left(L(a-\log s) - L(a-\log t)\right) }\right] \E \left[ Y(\log t) Y(\log s)\right], & \text{ if } t > 1, \ s>1, \\
\E \left[ e^{- \left( L(a) - L(a-\log t)\right)} \right] \E \left[ e^{ L(a-\log s) - L(a) }\right] \E \left[ Y(\log t) Y(\log s)\right], & \text{ if } t > 1, \ s\leq 1, \\
\E \left[ e^{2\left( L(a-\log t) - L(a)\right)} \right] \E \left[ e^{L(a-\log s) - L(a-\log t) }\right] \E \left[ Y(\log t) Y(\log s)\right], & \text{ if } t \leq 1, \ s\leq 1, \\
\end{cases}\\
&=\begin{cases}
t^{\psi(-1)} s^{\psi(-2)-\psi(-1)} \E \left[ Y(\log t) Y(\log s)\right], & \text{ if } t > 1, \ s>1, \\
t^{\psi(-1)} s^{-\psi(1)} \E \left[ Y(\log t) Y(\log s)\right], & \text{ if } t > 1, \ s\leq 1, \\
t^{\psi(1)-\psi(2)} s^{-\psi(1)} \E \left[ Y(\log t) Y(\log s)\right], & \text{ if } t \leq 1, \ s\leq 1, \\
\end{cases}
\end{aligned}
\end{equation}
where $\psi$ is the Laplace exponent of $L$. 

On the other hand, if $X$ is $L$-multifractal with stationary increments, then for $t>s$
\begin{equation}\label{e:covLMFsi}
\E X(t)X(s) = \begin{cases}
\frac{1}{2} \left( t^{\psi(-2)} + s^{\psi(-2)} - (t-s)^{\psi(-2)}\right) \E X(1)^2, & \text{ if } t > 1, \ s>1 \ \text{and} \ t-s>1, \\
\frac{1}{2} \left( t^{\psi(-2)} + s^{\psi(-2)} - (t-s)^{-\psi(2)}\right) \E X(1)^2, & \text{ if } t > 1, \ s>1 \ \text{and} \ t-s\leq 1, \\
\frac{1}{2} \left( t^{\psi(-2)} + s^{-\psi(2)} - (t-s)^{\psi(-2)}\right) \E X(1)^2, & \text{ if } t > 1, \ s\leq 1 \ \text{and} \ t-s>1, \\
\frac{1}{2} \left( t^{\psi(-2)} + s^{-\psi(2)} - (t-s)^{-\psi(2)}\right) \E X(1)^2, & \text{ if } t > 1, \ s\leq 1 \ \text{and} \ t-s\leq 1, \\
\frac{1}{2} \left( t^{-\psi(2)} + s^{-\psi(2)} - (t-s)^{-\psi(2)}\right) \E X(1)^2, & \text{ if } t \leq 1,
\end{cases}
\end{equation}
which follows from \eqref{L-MF:moscal} and the following identity valid for any stationary increments process with finite variance
\begin{align*}
\E X(t)X(s) &= \frac{1}{2} \left( \E X(t)^2 + \E X(s)^2 - \E \left(X(t)-X(s)\right)^2 \right)\\
&= \frac{1}{2} \left( \E X(t)^2 + \E X(s)^2 - \E X(t-s)^2 \right).
\end{align*}

We are now considering is it possible to choose $Y$ and $\psi$ in \eqref{e:covLMFgen} to get the covariance function \eqref{e:covLMFsi} as if $X$ has stationary increments. Let $u, h >0$ so that $e^{u+h}>1$, $e^u >1$ and suppose that $e^{u+h}-e^u \leq 1$. Then by equating \eqref{e:covLMFgen} and \eqref{e:covLMFsi} we have
\begin{align}
&\E Y(u+h) Y(u) = \E Y(\log e^{u+h}) Y(\log e^u)\nonumber\\
&\quad=\frac{1}{2} \E X(1)^2 \frac{e^{\psi(-2)(u+h)} + e^{\psi(-2)u} - (e^{u+h}-e^u)^{-\psi(2)}} {e^{\psi(-1)(u+h)} e^{(\psi(-2)-\psi(-1))u}}\nonumber\\
&\quad=\frac{1}{2} \E X(1)^2 \left( e^{(\psi(-2) - \psi(-1))h} + e^{-\psi(-1)h} - e^{-\psi(-1)h} e^{-(\psi(-2) + \psi(2))u} (e^{h}-1)^{-\psi(2)} \right).\label{e:covYudep}
\end{align}
Since $Y$ is stationary, \eqref{e:covYudep} must not depend on $u$, hence it should hold $\psi(-2)=-\psi(2)$. But then $\psi$ is a convex function passing through three collinear points $(-2,\-\psi(2))$, $(0,0)$ and $(2,\psi(2))$ and hence it must be linear (see e.g.~\cite[Lemma 2]{GLST2019Bernoulli}) implying that $L(1)$ is degenerate and $X$ is self-similar. 

To conclude, a process defined in Theorem \ref{thm:MFLamperti} with finite variance cannot have stationary increments unless it is self-similar. One can notice the problem appears with \eqref{e:covLMFsi} having different forms for $t-s>1$ and $t-s\leq 1$. However, if we restrict the time domain of the process to $\mathcal{T}=(0,1]$, then we can obtain a multifractal process with second-order stationary increments. 

Consider a multifractal process from Theorem \ref{thm:MFLamperti} restricted to $\mathcal{T}=(0,1]$. In this case, $X$ can be defined as
\begin{equation*}
X(t)= e^{L(-\log t)} Y(\log t), \quad t \in (0,1],
\end{equation*}
where $L=\{L(t), \, t \geq 0\}$ is some L\'evy process and $Y=\{Y(t),\, t \in \R\}$ is a stationary process. If $u<u+h<0$, then $e^{u+h}\leq 1$, $e^u \leq 1$, $e^{u+h}-e^u \leq 1$ and equating again \eqref{e:covLMFgen} and \eqref{e:covLMFsi} yields
\begin{align}
\E Y(u+h) Y(u) &= \E Y(\log e^{u+h}) Y(\log e^u)\nonumber\\
&=\frac{1}{2} \E X(1)^2 \frac{e^{-\psi(2)(u+h)} + e^{-\psi(2)u} - (e^{u+h}-e^u)^{-\psi(2)}} {e^{(\psi(1)-\psi(2))(u+h)} e^{-\psi(1)u}}\nonumber\\
&=\frac{1}{2} \E X(1)^2 \left( e^{-\psi(1)h} + e^{-(\psi(1)-\psi(2))h} - e^{-(\psi(1) - \psi(2))h} (e^{h}-1)^{-\psi(2)} \right)\nonumber\\
&=\frac{1}{2} \E X(1)^2 e^{-\psi(1)h} \left( 1 + e^{\psi(2)h} - (1 - e^{-h})^{-\psi(2)} \right),\label{e:covSolution}
\end{align}
which does not depend on $u$. Note that for $\psi(q)=-Hq$, $0<H<1$, we recover the covariance function of the stationary process obtained by the classical Lamperti transformation of fractional Brownian motion (see \cite{cheridito2003fractional}). In particular, for $\psi(q)=-q/2$ we get the Ornstein-Uhlenbeck (OU) process (see e.g.~\cite{samorodnitsky1994stable}). Recall that OU process $\{Y(u),\, u \in\R\}$ with parameter $\lambda>0$ is a stationary Gaussian process with mean zero and covariance function
\begin{equation*}
\E \left[ Y(u+h) Y(u) \right]= \E Y(0)^2 e^{-\lambda |h|}, \quad u, h \in \R.
\end{equation*}
Note that it is not immediately clear whether \eqref{e:covSolution} defines a covariance function of some stationary process. We will consider a simple example in the next subsection. Also note that although assuming $\mathcal{T}=(0,1]$ may seem overly restrictive, by using \eqref{e:extendingTandS} we can extend the time set to $(0,T]$ for arbitrary $T>0$.

We summarize the previous discussion in the following proposition.

\begin{proposition}\label{prop:newconstruction}
Let $T>0$ and suppose $L$ is a L\'evy process with Laplace exponent $\psi$ well-defined on $[0,2]$ and 
\begin{equation}\label{e:cov:Y:prop}
\gamma(h)=\frac{1}{2} \E Y(0)^2 e^{-\psi(1)h} \left( 1 + e^{\psi(2)h} - (1 - e^{-h})^{-\psi(2)} \right),
\end{equation}
is a covariance function of strictly stationary process $\{Y(t), \, t \in \R\}$. Then the process $\{X(t), \, t \in (0,T]\}$
\begin{equation*}
X(t) = e^{L\left(-\log (t/T \right)} Y \left(\log (t/T) \right), \quad t \in (0,T],
\end{equation*}
is multifractal with $\Lambda=(0,1]$, $\mathcal{S}=(0,T]$ and
\begin{equation}\label{e:process:cov}
\E X(t)X(s) = \frac{1}{2} T^{\psi(2)} \E X(1)^2 \left( t^{-\psi(2)} + s^{-\psi(2)} - |t-s|^{-\psi(2)}\right), \quad t,s \in (0,T].
\end{equation}
In particular, for any $\varepsilon>0$ the sequence $X(t_j+\varepsilon)-X(t_j)$, $j=1,\dots, \lfloor T/\varepsilon\rfloor$ is weakly stationary with
\begin{equation}\label{e:increment:cov}
\begin{aligned}
\E&\left(X(t_j+\varepsilon)-X(t_j)\right) \left(X(t_i+\varepsilon)-X(t_i) \right)\\
&= \frac{1}{2} \varepsilon^{-\psi(2)} \E X(1)^2 \left( |j-i+1|^{-\psi(2)} + |j-i-1|^{-\psi(2)} - 2 |j-i|^{-\psi(2)} \right).
\end{aligned}
\end{equation}
\end{proposition}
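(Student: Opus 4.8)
The argument is essentially an assembly of results already proved, so the plan is to combine Theorem~\ref{thm:MFLamperti}, the time-rescaling remark \eqref{e:extendingTandS}, and the covariance computations \eqref{e:covLMFgen}--\eqref{e:covSolution}. First, by hypothesis there is a strictly stationary process $Y=\{Y(t),\,t\in\R\}$ whose covariance is $\gamma$ from \eqref{e:cov:Y:prop}; I would feed this $Y$ together with the given L\'evy process $L$ into Theorem~\ref{thm:MFLamperti}, obtaining an $L$-multifractal process on $(0,\infty)$ with $\Lambda=\mS=(0,1]$. Restricting the time set to $\T=(0,1]$ leaves \eqref{mfdef:prop} valid over $\mS=(0,1]$ for every $\lambda\in(0,1]$ (since $\lambda t\in(0,1]$ whenever $\lambda,t\in(0,1]$), so the restriction is still multifractal with the same $\Lambda$ and $\mS$; taking $a=0$ in \eqref{Xadef} and using \eqref{XtoXa:fdd} shows that on $(0,1]$ it admits the version $t\mapsto e^{L(-\log t)}Y(\log t)$. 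Rescaling time by the factor $T$ as in \eqref{e:extendingTandS} then gives $X(t)=e^{L(-\log(t/T))}Y(\log(t/T))$, $t\in(0,T]$, multifractal with $\Lambda=(0,1]$ and $\mS=(0,T]$, which is the process in the statement.

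Next I would compute $\E X(t)X(s)$ for $s<t$ in $(0,T]$ straight from the definition of $X$. As $L$ and $Y$ are independent and the arguments $-\log(t/T),-\log(s/T)$ are nonnegative, writing $L(-\log(t/T))+L(-\log(s/T))=2L(-\log(t/T))+\bigl(L(-\log(s/T))-L(-\log(t/T))\bigr)$ and using independence of increments together with $\E e^{qL(u)}=e^{u\psi(q)}$ and the stationarity of $Y$ yields $\E X(t)X(s)=(T/t)^{\psi(2)}(t/s)^{\psi(1)}\,\gamma(\log(t/s))$. Substituting \eqref{e:cov:Y:prop} and simplifying with $e^{-\log(t/s)}=s/t$ --- this is exactly the algebra behind \eqref{e:covSolution}, run forwards --- collapses the right-hand side to $\tfrac12\,C\bigl(t^{-\psi(2)}+s^{-\psi(2)}-(t-s)^{-\psi(2)}\bigr)$ with $C=T^{\psi(2)}\E Y(0)^2=\E X(1)^2$, which is \eqref{e:process:cov}; for $T=1$ it is just the last branch of \eqref{e:covLMFgen} combined with \eqref{e:covSolution}.

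Finally, for the increments I would take equally spaced points $t_j=j\varepsilon$ (with indices chosen so that $0<t_j<t_j+\varepsilon\le T$) and expand
\[
  \E\bigl(X(t_j+\varepsilon)-X(t_j)\bigr)\bigl(X(t_i+\varepsilon)-X(t_i)\bigr)
\]
into the four terms $\E X(\cdot)X(\cdot)$, inserting \eqref{e:process:cov} into each. Since $\E X(m\varepsilon)X(n\varepsilon)=\tfrac12 C\varepsilon^{-\psi(2)}\bigl(m^{-\psi(2)}+n^{-\psi(2)}-|m-n|^{-\psi(2)}\bigr)$, all single-index power terms cancel in pairs, leaving only $\tfrac12 C\varepsilon^{-\psi(2)}\bigl(|j-i+1|^{-\psi(2)}+|j-i-1|^{-\psi(2)}-2|j-i|^{-\psi(2)}\bigr)$, which depends on $i,j$ only through $j-i$. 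This gives \eqref{e:increment:cov} and the asserted weak (second-order) stationarity of the increment sequence.

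I do not expect a genuine obstacle here, as all the pieces are in place; the delicate points are purely bookkeeping. One must ensure, after the rescaling by $T$, that every time argument stays in the range where $-\log(\cdot/T)\ge0$ so that the correct branch of \eqref{e:covLMFgen} and the derivation of \eqref{e:covSolution} apply; one must track the normalising constant through its equivalent forms $T^{\psi(2)}\E Y(0)^2$ and $\E X(1)^2$; and ``weakly stationary increments'' must be read in the second-order (covariance) sense explained just before the proposition. The only thing assumed rather than proved is that $\gamma$ in \eqref{e:cov:Y:prop} really is a covariance function of a strictly stationary process, which is part of the hypothesis and is illustrated by the example in the next subsection.
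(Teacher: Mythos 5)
Your proposal is correct and follows essentially the same route as the paper, whose ``proof'' of Proposition~\ref{prop:newconstruction} is precisely the preceding discussion: the covariance computation \eqref{e:covLMFgen}, its matching with the stationary-increments form \eqref{e:covLMFsi} to produce \eqref{e:covSolution}, the time rescaling \eqref{e:extendingTandS}, and the expansion of the increment covariance into four terms of \eqref{e:process:cov}. The only cosmetic difference is that you run the algebra forwards (from $\gamma$ to $\E X(t)X(s)$) while the paper derives $\gamma$ backwards from the desired covariance, and your bookkeeping of the constant as $T^{\psi(2)}\E Y(0)^2=\E X(1)^2$ is if anything the more careful reading of the normalisation in \eqref{e:process:cov}.
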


Identity \eqref{e:increment:cov} is easily obtained from \eqref{e:process:cov} and it takes the form of covariances of fractional Gaussian noise (see \cite{samorodnitsky1994stable}). If $\psi(2)=-1$, then the increments are uncorrelated. Furthermore, \eqref{e:process:cov} implies that for any $t,s \in (0,T]$
\begin{equation*}
\E \left(X(t) - X(s)\right)^2 = \frac{1}{2} T^{\psi(2)} \E X(1)^2 |t-s|^{-\psi(2)}.
\end{equation*}
If $-\psi(2)-1>0$, then by Kolmogorov's theorem (see e.g.~\cite[Theorem 2.2.8]{karatzas2012brownian}) there exists a modification of $\{X(t)\}$ which is locally H\"older continuous with exponent $\gamma$ for every $\gamma \in (0, (-\psi(2)-1)/2)$.

\subsection{Examples}

A number of processes can be constructed from Theorem \ref{thm:MFLamperti}. Given a L\'evy process $L$, one can simply take $Y(t)=1$ a.s.~to obtain positive multifractal process which is an exponential of a L\'evy process in logarithmic time extended to the whole $(0,\infty)$. 

We shall consider in more details a specific example that may be viewed as multifractal analog of Brownian motion. Suppose the L\'evy process $L$ is Brownian motion with drift $\mu$ so that $\psi(q)=\mu q + \sigma^2 q^2/2$, $q\in \R$. We consider the process constructed in Proposition \ref{prop:newconstruction} and take stationary process $Y$ to be OU process with parameter $\lambda = \psi(1)+1$, hence
\begin{equation*}
\E \left[ Y(u+h) Y(u) \right]= \E Y(0)^2 e^{-(\psi(1)+1) |h|}, \quad u, h \in \R.
\end{equation*}
Note that this is exactly \eqref{e:cov:Y:prop} with $\psi(2)=-1$. Hence, the process $\{X(t), \, t \in (0,T]\}$ defined in Proposition \ref{prop:newconstruction} will have second-order stationary increments. The condition $\psi(2)=-1$ implies $\mu=-1/2-\sigma^2$ and
\begin{equation*}
\psi(q)=-\left( \frac{1}{2} + \sigma^2 \right) q + \frac{\sigma^2}{2} q^2.
\end{equation*}
The increments of $X$ are uncorrelated and
\begin{equation*}
\E X(t) X(s) = \frac{\min \{t,s\}}{T}.
\end{equation*}

Since the classical Lamperti transformation \eqref{e:Lamptransclassical1} of OU process yields Brownian motion, the process $X$ represents a multifractal analog of Brownian motion. The scaling function function if given by $\tau(q)=-\psi(q)$ and well defined for $q \in (-1,\infty)$ since absolute moments of order less than or equal to one are infinite for Gaussian distribution. The scaling function is of the same form as the scaling function of multifractal random walk which is Brownian motion with time taken to be log-normal multiplicative cascade process (see \cite{bacry2003log}).

Other properties of these processes require deeper investigation which will be addressed in future work. One of the interesting question is whether the sample paths of these processes posses multifractal properties in the sense of the varying local regularity exponents (see e.g.~\cite{abry2015irregularities}, \cite{grahovac2014bounds}, \cite{jaffard1999multifractal} and the references therein).

\appendix

\section{Mellin transform}\label{appendix:mellin}
Recall that the Mellin transform (or Mellin-Stieltjes transform) of a nonnegative random variable $X$ with distribution function $F$ is defined as
\begin{equation*}
  \mathcal{M}_X(z) = \int_0^\infty x^z dF(x)
\end{equation*}
for $z \in \C$. The integral exists for all $z$ in some strip $S=\{z : \sigma_1 \leq \Re z \leq \sigma_2 \}$, $\sigma_1 \leq \sigma_2$ which contains the imaginary axis and possibly degenerates into this axis. The Mellin transform completely determines the distribution of nonnegative random variable $X$. Furthermore, if the strip $S$ does not degenerate into imaginary axis, it is uniquely determined by its values on the interval $(\sigma_1, \sigma_2)$. Indeed, in the case $P(X>0)=1$, by applying the change of variables it is easy to see that $\mathcal{M}_X$ can be expressed as the two-sided Laplace transform of the random variable $-\log X$ with distribution function $G$:
\begin{equation*}
  \mathcal{M}_X(z) = \int_{-\infty}^\infty e^{-zx} dG(x).
\end{equation*}
Since the two-sided Laplace transform is analytic function on $S$, so is $\mathcal{M}_X$ (see e.g.~\cite[p.~240]{widder1946laplace}). Therefore by the familiar property of analytic functions, $\mathcal{M}_X$ is uniquely determined by its values on the interval $(\sigma_1, \sigma_2)$. In the case $P(X=0)>0$, we can apply the same argument to random variable $Y$ defined by distribution function $\widetilde{F}(x)=(F(x)-F(0))/(1-F(0)) \mathbf{1}_{\{x\geq 0\}}$ and use the fact that $\mathcal{M}_X(z)=(1-F(0)) \mathcal{M}_Y(z)$. Moreover, if the strip $S$ does not degenerate into imaginary axis inversion formulas can be obtained by exploiting the relation with the two-sided Laplace transform. The definition can be extended to include real-valued variables, however we do not pursue this question here. More details about Mellin transform can be found in \cite{galambos2004products} and \cite{zolotarev1957mellin}.

The main reason Mellin transform proves useful is the following property: if $X$ and $Y$ are two independent nonnegative random variables and $\mathcal{M}_X$, $\mathcal{M}_Y$ are their Mellin transforms defined on strips $S_1$ and $S_2$ respectively, then the Mellin transform of the product $XY$ in the strip $S_1\cap S_2$ is
\begin{equation*}
  \mathcal{M}_{XY} (z) = \mathcal{M}_X (z) \mathcal{M}_Y (z).
\end{equation*}

\bigskip
\bigskip

\bibliographystyle{agsm}
\bibliography{References}

\end{document}